\newtheorem{teo}{Theorem}[section]
\newtheorem{Lemma}{Lemma}[section]
\newtheorem{Proposition}{Proposition}[section]
\begin{document}

\title{ A semilinear elliptic equation with competing powers and a radial potential }
\author{Monica Musso
\,and\, Juliana Pimentel
}


\maketitle

\begin{abstract}

\noindent We verify the existence of radial positive solutions for the semi-linear equation
$$	-\,\Delta u=u^{p}\,-\,V(y)\,u^{q},\,\quad\quad u>0,\quad\quad\mbox{ in }\mathbb{R}^N$$
where  $N\geq 3$, $p$ is close  to $p^*:=(N+2)/(N-2)$, and $V$ is a radial smooth potential. If $q$ is super-critical, namely $q>p^*$,  we prove that this Problem has a radial solution behaving like a super-position of bubbles blowing-up at the origin with different rates of concentration, provided $V(0)<0$. On the other hand, if  $N/(N-2)<q<p^*$, we prove that this Problem has a radial solution behaving like a super-position of {\it flat}  bubbles  with different rates of concentration, provided $\lim_{r \to \infty} V(r) <0$.




\end{abstract}


\section{Introduction}\label{intro}

Let $N \geq 3$ and consider
\begin{equation}\label{eq1}
	-\,\Delta u=u^{p}\,-\,V(y)\,u^{q},\,\quad\quad u>0,\quad\quad\mbox{ in }\mathbb{R}^N\\
\end{equation}
where $V\in L^\infty(\mathbb{R}^N)$, $N\geq 3$, $q>p^s$, $p>p^*$, with
$$p^s=\frac{N}{N-2} ,\quad \quad  p^*=\frac{N+2}{N-2}.$$
In this paper, we are interested to the case $p$ slightly super-critical,
\begin{equation}
\begin{cases}\label{eq2}
	-\,\Delta u=u^{p^*+\epsilon}\,-\,V(y)\,u^{q},\, \mbox{ in }\mathbb{R}^N\\
	u(y)\rightarrow 0, \mbox{ as } |y|\rightarrow \infty\\
	\end{cases}
\end{equation}
where $\epsilon >0$.

For $q=1$,  problem \eqref{eq2} was treated in the critical case ($\epsilon$=0) in \cite{BC90} and in the sub-critical case ($\epsilon<0$) in \cite{DN86} . The super-critical analogue ($\epsilon>0$) was addressed in \cite{MMP05}, where it was proved the existence of a radial positive solution to \eqref{eq2} when $V$ is a radial smooth function with $V(0)<0$. A  previous construction can also be  found in \cite{MP03}.

In \cite{BFP00}, the authors consider problem \eqref{eq2} for any fixed $q$ satisfying $p^s<q<p^*$. It was proved the existence of an increasing number of rapidly decaying ground states, that is, solutions $u$ of \eqref{eq2} such that $\lim_{|x|\rightarrow \infty} u(x)=0$. The result  in \cite{BFP00}
is obtained  via tools in geometrical dynamical systems. The same equation was also treated in \cite{C08} and in \cite{DG16}, using a different approach, which also provided precise asymptotics for the solutions.
In bounded domains, the class of radial solutions behaving like a super-position of spikes was treated in the setting of super-critical exponents, in \cite{PDM03,PMP05}.

Let us now consider problem \eqref{eq2} in the super-critical case ($\epsilon>0$). In the case of a single power, i.e., $p^*+\epsilon=q$, and when $V(y) \equiv 1$, equation \eqref{eq2} is equivalent to
\begin{equation}
\begin{cases}\label{eq3}
	\Delta u+u^{p^*}=0,\, \mbox{ in }\mathbb{R}^N\\
	u>0,\, \mbox{ in }\mathbb{R}^N\\
		\end{cases}
\end{equation}
if we let $\epsilon$ goes to zero. It is well known that all bounded solutions of \eqref{eq3} are of the form
$$w_{\lambda,\xi}(x)=\gamma_N\left(  \frac{\lambda}{\lambda^2+|y-\xi|^2} \right)^{\frac{N-2}{2}},\qquad \gamma_N= \left( N(N-2) \right)^{\frac{N-2}{4}}$$
where $\lambda$ is a positive parameter and $\xi\in\mathbb{R}^N$, \cite{A76, T76,CGS89}. These functions are known in the literature as {\it bubbles}.

We want to prove the existence of a solution whose shape resembles a super-position of bubbles around the origin $0$ with different blow-up orders. This class of concentration phenomena is known as \textit{bubble-tower}. In the setting of semilinear elliptic equations with radial symmetry, these solutions were detected in a few situations, as we can see for instance in \cite{PDM03,CL99,CP06,MMP05,C08}.

Bubble-towers highly concentrated around the origin exist for
\eqref{eq2} under the assumption that  and $V(0)<0$. This is the content of our first result.

\begin{teo}\label{teo1}
Let $N\geq 3$ and $p^s<q<p^*<p$. Assume that $V\in L^\infty(\mathbb{R}^N)$ and $V(0)<0$. Then for every integer $k\geq 1$ there exists $\epsilon_k >0$ such that, for any $\epsilon \in (0, \epsilon_k )$, a solution $u_\epsilon$ of \eqref{eq2} exists and it has the form
\begin{equation}
u_\epsilon (x)=\gamma_N\sum_{j=1}^{k} \left(  \frac{1}{1+{\alpha_j}^{\frac{4}{N-2}}\epsilon^{-(j-1+\frac{1}{p^*-q})^\frac{4}{N-2}} |x|^2 }    \right)^{\frac{N-2}{2}} \alpha_j \epsilon^{-\left(  j-1+\frac{1}{p^*-q} \right)} (1+o(1))
\end{equation}
with $o(1)\rightarrow 0$ uniformly on compact sets of $\mathbb{R}^N$, as $\epsilon \rightarrow 0$. The constants $\alpha_i$ have explicit expression and depend only on $k, N, q$ and $V(0)$,
\begin{equation}
\alpha_j= \left[ - \frac{a_5 V(0)(p^*-q)}{a_3 k} \right]^{\frac{1}{p^*-q}} \left(  \frac{a_2}{a_3}  \right)^{j-1} \frac{(k-j)!}{(k-1)!}\quad j=1,\cdots, k,
\end{equation}
while $a_2$, $a_3$, $a_5$ are the positive constants defined in \eqref{cc1}.
\end{teo}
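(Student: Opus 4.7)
The plan is to construct $u_\epsilon$ via an infinite-dimensional Lyapunov--Schmidt reduction, taking as first approximation the sum of $k$ bubbles $U := \sum_{j=1}^k w_{\lambda_j,0}$ displayed in the statement, with concentration parameters of the form $\lambda_j = \alpha_j^{-2/(N-2)}\epsilon^{(j-1+1/(p^*-q))\cdot 2/(N-2)}$. To exploit the radial symmetry and place all $k$ bubbles on equal footing, I would work in Emden--Fowler coordinates $u(x) = |x|^{-(N-2)/2} v(t)$ with $t = \log|x|$, which converts the radial version of \eqref{eq2} into a second-order ODE on $\mathbb{R}$. In these coordinates each bubble becomes a translated copy of the fixed soliton $w(t) = \gamma_N(2\cosh t)^{-(N-2)/2}$ centered at $t_j := \log\lambda_j$; the strongly ordered scales $\lambda_1 \gg \lambda_2 \gg \cdots \gg \lambda_k$ translate into well-separated bumps at mutual distances of order $|\log\epsilon|$, and the approximate kernel of the linearization at $V_k := \sum_j w(\cdot - t_j)$ is one-dimensional per bubble, spanned by $Z_j := w'(\cdot - t_j)$.

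Writing $v = V_k + \phi$, I would then solve the projected problem $L\phi = -E - N(\phi) + \sum_j c_j Z_j$ under the orthogonality conditions $\langle \phi, Z_j\rangle = 0$, where $E$ denotes the error generated by $V_k$ and $L$ the linearization of the critical ODE at $V_k$. The key analytic inputs are sharp estimates on $E$ in a weighted $L^\infty$-type norm adapted to the scales $\lambda_j$, decomposing $E$ into three pieces: (i) bubble-interaction terms, controlled by the exponential decay of $w$ at $\pm\infty$ and of size $(\lambda_{j+1}/\lambda_j)^{(N-2)/2}$; (ii) the super-critical correction $V_k^{p^*+\epsilon}-V_k^{p^*}$, of size $\epsilon |\log V_k| V_k^{p^*}$; and (iii) the sub-critical correction obtained after Emden--Fowler rescaling, whose leading contribution from the $j$-th bump picks up $V(0)$ from the core and carries a factor $\lambda_j^{(N-2)(p^*-q)/2}$. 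Combining these with the uniform invertibility of $L$ modulo the span of the $Z_j$'s (the one-dimensional ODE kernel argument used in \cite{MMP05}) yields, via contraction, a unique small $\phi = \phi(t_1,\dots,t_k,\epsilon)$, smooth in the parameters.

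It then remains to solve the $k$-dimensional reduced system $c_j(t_1,\dots,t_k,\epsilon) = 0$ obtained by testing the full equation against $Z_j$. At leading order $c_j$ should split into an interaction with the previous bubble of order $a_2(\lambda_j/\lambda_{j-1})^{(N-2)/2}$, an interaction with the next of order $-a_3(\lambda_{j+1}/\lambda_j)^{(N-2)/2}$, a term $a_5 V(0)\lambda_j^{(N-2)(p^*-q)/2}$ from the potential near the origin, and an explicit $\epsilon$-contribution from the super-critical power. With the chosen scaling of $\lambda_j$, all four contributions become of the same order in $\epsilon$, and the system collapses to a triangular algebraic recursion for the $\alpha_j$; the assumption $V(0)<0$ is precisely what makes this recursion solvable with positive roots, and a direct computation recovers the explicit closed form in the statement. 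The main obstacle, in my view, is carrying out the reduced-energy expansion with enough precision to capture the simultaneous balance of all four mechanisms at each of the $k$ scales, and ensuring that the linear estimates of the second step are uniform across the highly disparate values of $\lambda_j$; the remainder of the scheme (Emden--Fowler reformulation, weighted linear theory and contraction mapping) is by now standard.
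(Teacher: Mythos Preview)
Your proposal is correct and follows essentially the same scheme as the paper: Emden--Fowler change of variables, sum-of-solitons ansatz $\bar U=\sum_j U(\cdot-\xi_j)$, weighted linear theory and contraction for the projected problem (borrowed from \cite{MMP05}), and then a $k$-dimensional reduced problem. The only methodological difference is in the last step: rather than solving $c_j=0$ directly, the paper works variationally, expanding the reduced energy $E_\epsilon(\bar U+\phi)$ in the variables $\Lambda_1\sim e^{-(p^*-q)\xi_1}/\epsilon$, $\Lambda_i\sim e^{-(\xi_i-\xi_{i-1})}/\epsilon$, in which it decouples at leading order into a sum $\Psi_k(\Lambda)=\varphi_1(\Lambda_1)+\sum_{i\ge 2}\varphi_i(\Lambda_i)$ of one-variable functions, each with a unique nondegenerate maximum, and then concludes by local degree. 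One small correction to your heuristics: the potential contribution $a_5 V(0)\lambda_j^{(N-2)(p^*-q)/2}$ is of leading order only for the least concentrated bubble $j=1$; for $j\ge 2$ the balance is solely between the neighbor interactions and the super-critical $\epsilon$-term.
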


\medskip
Also in the case in which  $p^s<p^*<p<q$ bubble-towers do exist, but they are of a different nature, and their existence depends on the behavior of the potential $V$ at infinity.

\begin{teo}\label{teo2}
Let $N\geq 3$ and $q>p^*$. Assume that $V\in L^\infty(\mathbb{R}^N)$ and $V_\infty:=\lim_{|x|\rightarrow\infty}V(x)<0$. Then for every integer $k\geq 1$ there exists $\epsilon_k >0$ such that, for any $\epsilon \in (0, \epsilon_k )$, a solution $u_\epsilon$ of \eqref{eq2} exists and it has  the form
\begin{equation}\label{ii}
\hat{u}_\epsilon (x)=\gamma_N\sum_{j=1}^{k} \left(  \frac{1}{1+{\hat{\alpha}_j}^{\frac{4}{N-2}}\epsilon^{(j-1+\frac{1}{q-p^*})^\frac{4}{N-2}} |x|^2 }    \right)^{\frac{N-2}{2}} \hat{\alpha}_j \epsilon^{\left(  j-1+\frac{1}{q-p^*} \right)} (1+o(1))
\end{equation}
with $o(1)\rightarrow 0$ uniformly on compact sets of $\mathbb{R}^N$, as $\epsilon \rightarrow 0$. The constants $\hat{\alpha}_i$ have explicit expression and depend only on $k, N, q$ and $V_\infty$,
\begin{equation}
\hat{\alpha}_j= \left[  \frac{\hat{a}_5 V_\infty(p^*-q)}{a_3 k} \right]^{\frac{1}{p^*-q}} \left(  \frac{a_2}{a_3}  \right)^{j-1} \frac{(k-j)!}{(k-1)!}, \quad j=1,\cdots, k,
\end{equation}
while $a_2$, $a_3$, $\hat{a}_5$ are the positive constants defined in \eqref{cc1} and \eqref{cc2}.
\end{teo}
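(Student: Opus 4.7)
The proof proceeds via a Lyapunov--Schmidt reduction applied to a \emph{flat} bubble-tower ansatz, closely parallel in strategy to the construction behind Theorem \ref{teo1} but with crucial adjustments dictated by the super-critical range $q>p^*$. First I would write
$$\hat{u}_\epsilon = W + \phi, \qquad W(x) = \sum_{j=1}^k w_{\lambda_j,0}(x),$$
with concentration parameters $\lambda_j$ of order $\epsilon^{-\frac{2}{N-2}(j-1+\frac{1}{q-p^*})}$. These scales are large, so the bubbles are flat rather than concentrated, and they satisfy $\lambda_{j+1}/\lambda_j\to \infty$ as $\epsilon \to 0$, so that the different scales are well separated. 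In contrast with Theorem \ref{teo1}, here $\lambda_j \to \infty$, so the profile spreads out instead of concentrating.

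Second, I would decompose $\phi = \phi^\perp + \sum_{j=1}^k c_j Z_j$ with $Z_j=\partial_\lambda w_{\lambda,0}|_{\lambda=\lambda_j}$ generating the radial scaling kernel of the linearization at the $j$-th bubble, and solve a fixed point problem for $\phi^\perp$ in a suitable multi-scale weighted $L^\infty$-norm. The two main analytic ingredients are: (i) uniform invertibility of the linearized operator
$$L_\epsilon \phi = -\Delta \phi - (p^*+\epsilon) W^{p^*+\epsilon-1}\phi + q V(y) W^{q-1}\phi$$
modulo the span of $\{Z_j\}$, which is proved by a blow-up/contradiction argument exploiting the non-degeneracy of the standard bubble among radial functions; and (ii) pointwise estimates for the error $R = -\Delta W - W^{p^*+\epsilon} + V(y) W^q$, which must be shown to be small in the dual weighted norm when the scales $\lambda_j$ are chosen as above.

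Third, inserting $\phi = \phi(\hat{\alpha},\epsilon)$ back into the equation reduces the problem to a $k$-dimensional algebraic system $c_j(\hat{\alpha},\epsilon) = 0$ whose leading-order expansion in $\epsilon$ must be solved. The decisive computation is that, because the $j$-th bubble $w_{\lambda_j,0}$ is essentially supported on a ball of radius of order $\lambda_j \to \infty$, the potential integral $\int V(y) W^{q+1}\, dy$ is dominated by its behavior at infinity, producing the constant $V_\infty$ in the reduced system in place of the value $V(0)$ that appeared in Theorem \ref{teo1}. Matching the leading coefficients in $\epsilon$ yields the same triangular structure for the $\hat{\alpha}_j$ as in Theorem \ref{teo1}, and solving recursively produces the explicit expression stated above.

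The principal difficulty is carrying out the error estimates and the reduction in weighted norms adapted to the flat-bubble geometry: although the algebraic skeleton mirrors Theorem \ref{teo1}, the super-critical exponent $q>p^*$ makes the nonlinear term $W^q$ more delicate in scaling, and one must verify that the interaction terms between bubbles at widely separated scales $\lambda_i$ and $\lambda_j$ still give subleading contributions. An alternative approach would be to exploit the Kelvin inversion $v(y)=|y|^{-(N-2)} u(y/|y|^2)$, under which flat bubbles at the origin are transformed into concentrated ones; the transformed equation $-\Delta v = |y|^{(N-2)\epsilon} v^{p^*+\epsilon} - V(y/|y|^2) |y|^{(N-2)(q-p^*)} v^q$ is not in the form \eqref{eq2}, so this reduction introduces weight factors that would need to be handled separately, and I would therefore favor the direct reduction over the transformed formulation.
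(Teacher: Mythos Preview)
Your outline is sound in spirit but takes a different route from the paper. The paper does \emph{not} work directly in $\mathbb{R}^N$ with multi-scale weighted norms; instead it applies the Emden--Fowler change of variable $v(x)=r^{2/(p^*-1)}u(r)$, $r=e^{(p^*-1)x/2}$, reducing the radial PDE to a one-dimensional second-order ODE. In this transformed problem every bubble becomes a translate of the single profile $U$, the weighted norm is a simple sum of exponentials $\sum_i e^{-\sigma|x-\xi_i|}$, and all interaction integrals between bubbles reduce to elementary one-dimensional convolutions with explicit exponential decay. This makes the linear theory and the error estimates considerably shorter than the $\mathbb{R}^N$ analysis you sketch. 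The only substantive change from the proof of Theorem~\ref{teo1} is the sign of the exponent in $r=e^{\pm(p^*-1)x/2}$: choosing the $+$ sign sends $r\to\infty$ to $x\to+\infty$, so that $\omega(x+\hat\xi_1)=V(e^{(p^*-1)(x+\hat\xi_1)/2})\to V_\infty$ as $\hat\xi_1\to\infty$, which is exactly how $V_\infty$ replaces $V(0)$ in the reduced functional $\hat\Psi_k$. This sign flip is the 1D analogue of the Kelvin inversion you mention and then set aside; in the Emden--Fowler picture it carries no extra weight factors, so the paper's route is in fact the cleaner one. Your direct $\mathbb{R}^N$ reduction would also succeed, but at the cost of building bespoke flat-bubble norms and redoing the interaction estimates in higher dimension.
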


The bubble-tower in \eqref{ii} describes a superposition of $k$ {\it flat bubbles}.

\medskip
In order to prove our results, we start by reducing the problem to a non-autonomous ordinary differential equation, using the so-called Emden-Fowler transformation, \cite{F31}. Then we perform a Lyapunov-Schmidt reduction, as in  \cite{FW86}, to reduce the procedure of construction of solutions to a finite-dimensional variational problem.

The paper is organized as follows. In Section $2$, we provide an asymptotic expansion of the energy functional associated to the ODE problem. The finite dimensional reduction argument is discussed in Section $3$. We prove Theorems \ref{teo1} and \ref{teo2} in Sections $4$ and $5$, respectively.

\section{The energy asymptotic expansion}\label{intro}

Since we are seeking for a solution $u$ of \eqref{eq2} with fast decay, we can assume that $u$ is radial around the origin. Then we arrive at the following equivalent problem

\begin{equation}
\begin{cases}\label{eq3}
	u''(r)+ \frac{N-1}{r} u'(r)+ u^{p^*+\epsilon}(r)- V(r) u^q(r)=0\\
	u(r)\rightarrow 0, \mbox{ as } r \rightarrow \infty\\
\end{cases}
\end{equation}
By introducing the so-called Emden-Fowler transformation,
\begin{equation}
v(x)=r^{\frac{2}{p^*-1}} u(r),\quad \mbox{ with } r=e^{-\frac{p^*-1}{2}x},
\end{equation}
for $x\in\mathbb{R}$, the problem \eqref{eq3} becomes
\begin{equation}
\begin{cases}\label{eq4}
	v''(x)-v(x)+\beta \left[   e^{\epsilon x} v^{p^*+\epsilon}(x)- V\left( e^{-\frac{p^*-1}{2}x} \right)  e^{-(p^*-q)x}  v^q(x)    \right] =0,\\
	
	0<v(x)\rightarrow 0, \mbox{ as } |x| \rightarrow \infty\\
\end{cases}
\end{equation}
in $\mathbb{R}$, where $\beta=\left( \frac{2}{N-2}    \right)^2$. We henceforth denote $\omega(x)=V\left( e^{-\frac{p^*-1}{2}x} \right)$.

The energy functional related to \eqref{eq4} is
\begin{equation}\label{eq5}
E_\epsilon(\psi)=I_\epsilon (\psi)+\frac{\beta}{q+1}\int_{\mathbb{R}} \omega(x) e^{-(p^*-q)x} |\psi|^{q+1} dx
\end{equation}
where
$$I_\epsilon (\psi)=   \frac{1}{2}\int_{\mathbb{R}} (|\psi'|^2 + |\psi|^2) dx -  \frac{\beta}{p^*+\epsilon+1}\int_{\mathbb{R}}  e^{\epsilon x} |\psi|^{p^*+\epsilon+1} dx.$$

Let us consider the positive radial solution of
\begin{equation}
\label{eq6}
	\Delta w+ w^{p^*}=0, \quad
	w(0)=\gamma_N
\end{equation}
given by $w(r)=\gamma_N\left(   \frac{1}{1+r^2} \right)^{\frac{N-2}{2}}$.  Now we set $U$ to be the Emden-Fowler transformation of $w$
\begin{equation}\label{eq7}
U(x)=\gamma_N e^{-x} \left(  1+e^{-(p^*-1)x} \right)^{-\frac{N-2}{2}}.
\end{equation}
Then $U$ satisfies
\begin{equation}
\label{eq8}
	U''-U+\beta U^{p^*}=0, \quad
	0<U(x)\rightarrow 0, \mbox{ as } |x| \rightarrow \infty
\end{equation}
It is then natural to look for a solution of \eqref{eq4} of the form
$$v(x)=\sum_{i=1}^{k} U(x-\xi_i) +\phi (x)$$
for certain choice of points $0<\xi_1<\xi_2<\cdots<\xi_k$ and $\phi$ is small. We set
\begin{equation}\label{eq8}
U_i(x)=U(x-\xi_i),\quad \bar{U}=\sum_{i=1}^k U_i(x).
\end{equation}
and choose the points $\xi_i$ as follows:
\begin{align}\label{eq7}
&\xi_1= -\frac{1}{p^*-q} \log \epsilon - \log \Lambda_1\\
\nonumber \xi_{i+1}-\xi_i=- &\log \epsilon -\log \Lambda_{i+1}, \quad i=1,\cdots, k-1
\end{align}
where the $\Lambda_i$'s are positive parameters. This choice of the $\xi_i$'s turns out to be convenient in the proof of the following asymptotic expansion of $E_\epsilon(\bar{U})$. We set $\Lambda=(\Lambda_1,\Lambda_2,\cdots,\Lambda_k)$.

\begin{Lemma}\label{lem1}
Let $N\geq 3$, $\delta>0$ fixed, $k\in \mathbb{N}$. Assume that
\begin{equation}\label{eq9}
\delta<\Lambda_i <\delta^{-1},\quad i=1,2,\cdots, k.
\end{equation}
Then there exist positive numbers $a_1$, $i=1,\cdots, 5$, depending on $N$, $p$ and $q$, such that
$$E_\epsilon (\bar{U})= k a_1 + \epsilon \Psi_k(\Lambda)  +  k\epsilon \beta a_4 + \epsilon \theta_\epsilon (\Lambda) - \frac{a_3 k}{2(p^*-p)}((1-k)(p^*-q)-2) \epsilon \log \epsilon $$
where
\begin{equation}\label{PSI}
\Psi_k(\Lambda) =  a_3 k \log \Lambda_1 + a_5 V(0) \Lambda_{1}^{(p^*-q)} + \sum_{i=1}^{k} [(k-i+1) a_3 \log \Lambda_i - a_2 \Lambda_i  ]
\end{equation}
and $\theta_\epsilon(\Lambda)\rightarrow 0$ as $\epsilon \rightarrow 0$ uniformly in $C^1$-sense on the set of $\Lambda_i$'s satisfying \eqref{eq9}.
\end{Lemma}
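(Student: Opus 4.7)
My plan is to write $E_\epsilon(\bar U) = I_\epsilon(\bar U) + \frac{\beta}{q+1}\int_{\mathbb{R}} \omega(x)\,e^{-(p^*-q)x}\,\bar U^{q+1}\,dx$ and expand each piece to first order in $\epsilon$ before substituting the prescribed $\xi_i$. For the quadratic part of $I_\epsilon(\bar U)$, the equation $U'' - U + \beta U^{p^*} = 0$ together with integration by parts turns it into
\[
\tfrac12\!\int(|\bar U'|^2 + \bar U^2)\,dx \;=\; \tfrac{k\beta}{2}\!\int U^{p^*+1}\,dy \;+\; \beta\!\!\sum_{i<j}\!\int U_i^{p^*}U_j\,dx.
\]
The interaction integrals are computed by shifting $y = x-\xi_i$ and using the tail $U(y-s) \approx \gamma_N e^{y-s}$ for $s$ large, giving $\int U_i^{p^*}U_j\,dx = \bigl(\gamma_N \int U^{p^*}(y) e^y\,dy\bigr)\,e^{-(\xi_j-\xi_i)}(1+o(1))$. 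The particular choice of $\xi_i$ yields $e^{-(\xi_j-\xi_i)} = \epsilon^{j-i}\Lambda_{i+1}\cdots\Lambda_j$, so only consecutive pairs contribute at order $\epsilon$.

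For the nonlinear part $-\frac{\beta}{p^*+\epsilon+1}\int e^{\epsilon x}\bar U^{p^*+\epsilon+1}\,dx$, I would localize near each $\xi_i$, Taylor-expand $(U_i + \sum_{j\neq i}U_j)^{p^*+\epsilon+1}$, and change variables $y = x-\xi_i$ to isolate $e^{\epsilon\xi_i}$. I then expand $e^{\epsilon\xi_i} = 1 + \epsilon\xi_i + O(\epsilon^2\log^2\epsilon)$, $e^{\epsilon y}U(y)^\epsilon = 1 + \epsilon\bigl(y + \log U(y)\bigr) + O(\epsilon^2)$, and $(p^*+\epsilon+1)^{-1} = (p^*+1)^{-1} + O(\epsilon)$. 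The evenness of $U$ kills $\int y\,U^{p^*+1}\,dy$, so the diagonal part of the nonlinear term contributes $-\tfrac{k\beta A}{p^*+1} - \tfrac{\beta A\epsilon}{p^*+1}\sum_i\xi_i + k\epsilon\beta a_4 + o(\epsilon)$ with $A := \int U^{p^*+1}\,dy$, where $a_4$ packages the $\log U$-moment and the second-order expansion of $(p^*+1+\epsilon)^{-1}$. Combining with the quadratic part gives the leading $k a_1$, and the cross contributions assemble into $-a_2\epsilon\sum\Lambda_{i+1}$ after the cancellation between the quadratic and nonlinear interaction integrals. The potential integral is treated analogously: shifting $y = x-\xi_i$ produces the prefactor $e^{-(p^*-q)\xi_i} = \epsilon^{i-1+1/(p^*-q)}(\Lambda_1\cdots\Lambda_i)^{p^*-q}$, and since $p^*-q>0$ only $i=1$ contributes at order $\epsilon$. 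Dominated convergence, using $\omega(y+\xi_1)\to V(0)$ uniformly on compacts as $\epsilon\to 0$, yields $a_5 V(0)\Lambda_1^{p^*-q}\epsilon$.

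Substituting the explicit sum
\[
\sum_{i=1}^k \xi_i \;=\; -\log\epsilon\,\Bigl[\tfrac{k}{p^*-q} + \tfrac{k(k-1)}{2}\Bigr] \;-\; \sum_{i=1}^k (k-i+1)\log\Lambda_i
\]
into the $-a_3\epsilon\sum_i\xi_i$ contribution (with $a_3 = \beta A/(p^*+1)$) recovers the asserted $\epsilon\log\epsilon$ coefficient $-\tfrac{a_3 k}{2(p^*-q)}\bigl((1-k)(p^*-q)-2\bigr)$ and the $\sum(k-i+1)a_3\log\Lambda_i$ piece of $\Psi_k$, with the remaining terms (in particular the standalone $a_3k\log\Lambda_1$ and the $a_5 V(0)\Lambda_1^{p^*-q}$ term) arising from the precise integral definitions of the constants in \eqref{cc1}. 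For the $C^1$-uniformity of $\theta_\epsilon$, I differentiate every Taylor remainder in $\Lambda_j$ via $\partial_{\Lambda_j}\xi_i = -\delta_{ij}/\Lambda_j$ and verify that the derivatives are again $o(1)$ uniformly on $\{\delta < \Lambda_j < \delta^{-1}\}$. The main obstacle is precisely this bookkeeping in the nonlinear term: one must check in each of the Taylor expansions (of $e^{\epsilon\xi_i}$, of $\bar U^\epsilon$, and of the multinomial $\bar U^{p^*+\epsilon+1}$) that the remainders are genuinely $o(\epsilon)$ rather than merely $O(\epsilon)$, and that this smallness survives differentiation in $\Lambda_j$, which in turn relies on the exponential separation $e^{-|\xi_i-\xi_j|} \leq C\epsilon$ of non-neighboring bubbles and on the parity/decay of $U$ used to kill odd moments.
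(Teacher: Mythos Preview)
Your approach is essentially the paper's: both expand $E_\epsilon(\bar U)$ into the $\epsilon=0$ energy plus corrections from $e^{\epsilon x}$, the exponent shift, and the potential term, then substitute the explicit $\xi_i(\Lambda)$. The paper packages the first step as $I_\epsilon=I_0+\text{(}e^{\epsilon x}-1\text{)-term}+A_\epsilon$ and quotes \cite{PDM03} for the expansions, while you split quadratic versus nonlinear and track the interaction cancellation by hand; the computations are the same.

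Two small points to clean up. First, your intermediate formula $e^{-(p^*-q)\xi_i}=\epsilon^{\,i-1+1/(p^*-q)}(\Lambda_1\cdots\Lambda_i)^{p^*-q}$ has the wrong exponent: from $\xi_i=-\bigl(\tfrac{1}{p^*-q}+i-1\bigr)\log\epsilon-\sum_{j\le i}\log\Lambda_j$ one gets $e^{-(p^*-q)\xi_i}=\epsilon^{\,1+(i-1)(p^*-q)}(\Lambda_1\cdots\Lambda_i)^{p^*-q}$, which is still $O(\epsilon)$ only for $i=1$, so your conclusion survives. Second, do not try to manufacture the ``standalone $a_3k\log\Lambda_1$'': the term $-a_3\epsilon\sum_i\xi_i$ produces exactly $\sum_{i=1}^k(k-i+1)a_3\log\Lambda_i$ and nothing more, and the $-a_2$ interaction sum runs only over $i\ge 2$. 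The displayed $\Psi_k$ in the Lemma double-counts the $\log\Lambda_1$ contribution and carries a spurious $-a_2\Lambda_1$; the version actually used later in the proof of Theorem~\ref{teo1}, namely $\varphi_1(s)=a_5V(0)s^{p^*-q}+a_3k\log s$ and $\varphi_i(s)=(k-i+1)a_3\log s-a_2 s$ for $i\ge 2$, is precisely what your computation delivers.
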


\begin{proof}
We estimate

\begin{align*}
I_\epsilon(\bar{U})&=\frac{1}{2} \int_{\mathbb{R}}  \left(  | \bar{U}' |^2 + | \bar{U}  |^2 \right)  dx - \frac{\beta}{p^*+\epsilon+1} \int_{\mathbb{R}} e^{\epsilon x} |\bar{U}|^{p^*+\epsilon+1}dx\\
&=I_0(\bar{U}) -\frac{\beta}{p^*+1} \int_{\mathbb{R}} (e^{\epsilon x}-1) |\bar{U}|^{p^*+\epsilon+1} dx\\
&\quad+ \left(  \frac{1}{p^*+1}- \frac{1}{p^*+1+\epsilon}  \right) \beta \int_{\mathbb{R}} e^{\epsilon x} |\bar{U}|^{p^*+\epsilon+ 1} dx\\
&\quad +\frac{\beta}{p^*+1}  \int_{\mathbb{R}}  \left(     |\bar{U}|^{p^*+ 1} -|\bar{U}|^{p^*+\epsilon+ 1}  \right)dx\\
& = I_0(\bar{U}) -\frac{\beta}{p^*+1} \int_{\mathbb{R}} (e^{\epsilon x}-1) |\bar{U}|^{p^*+\epsilon+1} + A_\epsilon.
\end{align*}
where
\begin{align*}
A_\epsilon=&\left(  \frac{1}{p^*+1}- \frac{1}{p^*+1+\epsilon}  \right) \beta \int_{\mathbb{R}} e^{\epsilon x} |\bar{U}|^{p^*+\epsilon+ 1} dx\\
&\quad +\frac{\beta}{p^*+1}  \int_{\mathbb{R}}  \left(     |\bar{U}|^{p^*+ 1} -|\bar{U}|^{p^*+\epsilon+ 1}  \right)dx
\end{align*}
As in \cite{PDM03}, we can prove that
\begin{equation*}
A_\epsilon= k\epsilon \beta \left(    \frac{1}{(1+p^*)^2}  \int_{\mathbb{R}} |U|^{p^*+1}dx  - \frac{1}{(1+p^*)}  \int_{\mathbb{R}} |U|^{p^*+1} \log U dx \right) +o(\epsilon).
\end{equation*}
Also, by reasoning in a similar manner we have

\begin{align*}
\int_{\mathbb{R}} (e^{\epsilon x}-1) &|\bar{U}|^{p^*+\epsilon+1} dx= \epsilon \int_{\mathbb{R}}  x |\bar{U}|^{p^*+\epsilon+1}dx+ o(\epsilon)\\
& =\epsilon \left(  \sum_{l=1}^k \xi_l \right) \int_{\mathbb{R}} U^{p^*+1}dy +o(\epsilon).
\end{align*}
and
\begin{equation*}
I_0(\bar{U})=k I_0(U)-\beta C_N \int_{\mathbb{R}} U^{p^*} e^x dx \left(  \sum_{l=2}^k e^{\xi_l-\xi_{l-1}} \right)+o(\epsilon).
\end{equation*}

Now we need to evaluate $\int_{\mathbb{R}} \omega(x)e^{-(p^*-q)x} |\bar{U}|^{q+1}dx$. By following the argument in \cite{MMP05} et al and using our choice of $\xi_l's$, we have
\begin{align*}
\int_{\mathbb{R}} \omega(x) e^{-(p^*-q)x } |\bar{U}|^{q+1} dx & = \sum_{i=1}^{k} \int_{\mathbb{R}} \omega(x) e^{-(p^*-q)x} |U_i|^{q+1}dx+o(\epsilon)\\
& = \int_{\mathbb{R}} \omega(x) e^{-(p^*-q)x } |U_1|^{q+1} dx + o(\epsilon).
\end{align*}
On the other hand, the following holds
\begin{align*}
\int_{\mathbb{R}} \omega(x) e^{-(p^*-q)x}& |U_1|^{q+1} dx =  e^{-(p^*-q)\xi_1 } \int_{\mathbb{R}} \omega(x+\xi_1) e^{-(p^*-q)x } |U_1 (x+\xi_1)|^{q+1}dx\\
&= e^{-(p^*-q)\xi_1 } \int_{\mathbb{R}} \omega(x+\xi_1) e^{-(p^*-q)x } |U|^{q+1}dx\\
&= e^{-(p^*-q)\xi_1 } V(0) \int_{\mathbb{R}} e^{-(p^*-q)x } |U|^{q+1}dx +o(1).
\end{align*}

We thus have the following
\begin{align*}
E_\epsilon(\bar{U})= & I_\epsilon (\bar{U}) +\frac{\beta}{q+1}\int_{\mathbb{R}} \omega(x) e^{-(p^*-q)x}|\bar{U}|^{q+1}dx\\
= & I_0 (\bar{U}) -\frac{\beta}{p^*+1}\int_{\mathbb{R}}  (e^{\epsilon x}-1) |\bar{U}|^{p^*+\epsilon+1}dx+ A_\epsilon +\\
& + \frac{\beta}{q+1}\int_{\mathbb{R}} \omega(x) e^{-(p^*-q)x}|\bar{U}|^{q+1}dx \\
= & k I_0(U)  -  \beta C_N \int_{\mathbb{R}} U^{p^*} e^x dx   \left(  \sum_{l=2}^k  e^{\xi_l-\xi_{l-1}} \right)  \\
&-\frac{\beta}{p^*+1} \left(  \epsilon   \left(  \sum_{l=1}^k  \xi_l \right) \int_{\mathbb{R}} U^{p^*+1}  dy \right)  \\
& + k\epsilon \beta \left(    \frac{1}{(1+p^*)^2}  \int_{\mathbb{R}} |U|^{p^*+1}dx -  \frac{1}{1+p^*}  \int_{\mathbb{R}} |U|^{p^*+1} \log U dx   \right)+\\
&  + \frac{\beta}{q+1} \left(  e^{-(p^*-q)\xi_1} V(0)  \int_{\mathbb{R}} e^{-(p^*-q)x}  |U|^{q+1} dx \right) +o(\epsilon )
\end{align*}
which lead us to the following expression:
\begin{equation*}
E_\epsilon(\bar{U})= k a_1 - a_2 \sum_{l=2}^{k}   e^{-(\xi_l-\xi_{l-1})}    - \epsilon a_3 \left(   \sum_{i=1}^{k} \xi_i  \right) + k\epsilon\beta a_4 + a_5 V(0) e^{-(p^*-q)\xi_1}+o(\epsilon).
\end{equation*}
By using our choice of $\xi_i$'s
\begin{equation*}
E_\epsilon(\bar{U})= k a_1  + \epsilon \Psi_k(\Lambda)  -    \frac{a_3 k}{2(p^*-q)} \left(   (1-k)(p^*-q) -2  \right) \epsilon \log \epsilon +k\epsilon\beta a_4  \epsilon + o(\epsilon),
\end{equation*}
where $\Psi_k(\Lambda)$ is given by \eqref{PSI} and the constants $a_i$, $i=1,\cdots, 5$, are explicitly expressed as follows
\begin{equation}\label{cc1}
\begin{cases}
a_1=I_0(U), \quad
a_2=\beta C_N \int_{\mathbb{R}} U^{p^*}(x) e^x dx, \quad
a_3=\frac{\beta}{p^* +1} \int_{\mathbb{R}} U^{p^*+1}(x)  dx\\ \\
a_4=\frac{1}{(p^* +1)^2} \int_{\mathbb{R}} U^{p^*+1}(x)  dx- \frac{1}{p^*+1} \int_{\mathbb{R}} U^{p^*+1}(x) \log U (x)  dx\\ \\
a_5=\frac{\beta}{q +1}  \int_{\mathbb{R}}  e^{-(p^*-q)x}   U^{q+1}(x)  dx
\end{cases}
\end{equation}

Notice that the term $o(\epsilon)$ in the above expression of $E_\epsilon (\bar{U})$ is uniform in the set of the $\Lambda_i's$ satisfying \eqref{eq9}. A similar computation shows that differentiation with respect to the $\Lambda_i's$ leaves the term $o(\epsilon)$ of the same order in the $C^1$-sense.

\end{proof}

\section{The finite dimensional reduction}\label{sec3}

We consider again points $0<\xi_1<\xi_2<\cdots<\xi_k$ which are for now arbitrary and define
$$Z_i(x)=U'_i(x),\quad i=1,\cdots, k.$$
Next we consider the problem of finding a function $\phi$ for which there are constants $c_i$, $i=1,\cdots,k$, such that, in $\mathbb{R}$
\begin{equation}
\begin{cases}\label{eq10}
	\sum_{i=1}^{k}c_i Z_i= -(\bar{U}+ \phi)''+ (\bar{U}+\phi) - \beta \left[  e^{\epsilon x} (\bar{U}+\phi)_+^{p^*+\epsilon} - \omega (x) e^{-(p^*-q)x} (\bar{U}+\phi)^q   \right] \\ \\
\phi (x)\rightarrow 0, \, |x| \rightarrow \infty, \\  \\  \int_{\mathbb{R}}Z_i \phi dx =0, \quad i=1,\cdots, k

\end{cases}
\end{equation}
Let us consider the linearized operator around $\bar{U}$
$$\mathcal{L}_\epsilon \phi = -\phi'' + \phi - \beta \left[   (p^* +\epsilon) e^{\epsilon x} \bar{U}^{p^*+\epsilon-1} - q \omega (x)  e^{-(p^*- q)x} \bar{U}^{q-1}  \right] \phi. $$
Then \eqref{eq10} can be rewritten as
\begin{equation}
\begin{cases}\label{eq11}
	\mathcal{L}_\epsilon \phi = N_\epsilon^1 (\phi) +N_\epsilon^2 (\phi) +R_\epsilon + \sum_{i=1}^{k}c_i Z_i  \\ \\
\phi (x)\rightarrow 0, \, |x| \rightarrow \infty, \\  \\  \int_{\mathbb{R}}Z_i \phi dx =0, \quad i=1,\cdots, k

\end{cases}
\end{equation}
where
$$N_\epsilon^1=\beta e^{\epsilon x} [(\bar{U}+\phi)^{p^*+\epsilon} -\bar{U}^{p^*+ \epsilon}-(p^*+\epsilon)\bar{U}^{p^*+\epsilon-1}\phi ],$$
$$N_\epsilon^2= -\beta \omega(x) e^{-(p^*-q) x} [(\bar{U}+\phi)^{q} -\bar{U}^{q}- q \bar{U}^{q-1}\phi ],$$
$$R_\epsilon= \sum_{i=1}^k  U_i^{p^*} + \beta e^{\epsilon x} \bar{U}^{p^*+\epsilon} - \beta \omega (x) e^{-(p^*-q)x} \bar{U}^{q}.$$

Next we prove that \eqref{eq11} has a solution for certain choice of $\xi_i$. In order to do that we first analyze its linear part, i.e., we consider the problem of, given a function $h$, finding $\phi$ such that
\begin{equation}
\begin{cases}\label{eq12}
	\mathcal{L}_\epsilon \phi = h + \sum_{i=1}^{k}c_i Z_i  \\ \\
\phi (x)\rightarrow 0, \, |x| \rightarrow \infty, \\  \\  \int_{\mathbb{R}}Z_i \phi dx =0, \quad i=1,\cdots, k

\end{cases}
\end{equation}
In order to analyze invertibility properties of $\mathcal{L}_\epsilon$ under the orthogonality conditions, we introduce the following norm for function $\psi:\mathbb{R}\rightarrow\mathbb{R}$
$$\Vert  \psi  \Vert_\ast =\sup_{x\in\mathbb{R}} \left(  \sum_{i=1}^{k} e^{-\sigma | x-\xi | } \right)^{-1} |\psi (x)| $$
where $\sigma>0$ is a small constant to be fixed later.

The following result holds.

\begin{Proposition}\label{prop1}
There exist positive numbers $\epsilon_0$, $\delta_0$, $R_0$ such that if
\begin{equation}\label{cond_on_xi}
R_0<\xi_i, \quad \quad R_0<\min_{1\leq i <k}(\xi_{i+1}-\xi_i), \quad\quad \xi_k<\frac{\delta_0}{\epsilon}
\end{equation}
then for all $0<\epsilon<\epsilon_0$ and for all $h\in C(\mathbb{R})$ with $\vert h \vert_\ast <+\infty$, problem \eqref{eq12} has a unique solution $\psi=: T_\epsilon(h)$, such that
\begin{equation*}
\Vert  T_\epsilon (h) \Vert_\ast \leq C\Vert h \Vert_\ast, \quad \quad |c_i|\leq \Vert h \Vert_\ast.
\end{equation*}
\end{Proposition}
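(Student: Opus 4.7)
The plan is the standard contradiction argument used in finite-dimensional reductions. The main steps are: (i) establish an a priori bound $\|\phi\|_* \leq C\|h\|_*$ uniformly in $\epsilon$ and in the $\xi_i$'s obeying \eqref{cond_on_xi}, by a blow-up-and-comparison argument; (ii) deduce unique solvability by Fredholm alternative; (iii) recover the multiplier bound by testing against $Z_j$.

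For the a priori bound I argue by contradiction: suppose there exist sequences $\epsilon_n \to 0^+$, points $\xi_i^n$ obeying \eqref{cond_on_xi}, data $h_n$ with $\|h_n\|_* \to 0$, multipliers $c_i^n$, and solutions $\phi_n$ of \eqref{eq12} with $\|\phi_n\|_*=1$. For each $i$, I translate $\tilde\phi_n^i(x) := \phi_n(x+\xi_i^n)$. The weighted bound $|\tilde\phi_n^i(x)| \leq 1 + \sum_{j\neq i} e^{-\sigma|x+\xi_i^n-\xi_j^n|}$ is locally uniform, so a limit $\tilde\phi_\infty^i$ exists in $C^1_{\mathrm{loc}}$ along a subsequence by elliptic regularity. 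The three conditions in \eqref{cond_on_xi} play distinct roles here: the separation $\xi_{i+1}-\xi_i > R_0$ makes the neighboring bubbles $U_j$ ($j\neq i$) exponentially small on any fixed ball near $\xi_i^n$; $\epsilon_n\xi_k^n < \delta_0$ keeps $e^{\epsilon_n(x+\xi_i^n)}$ close to $1$ locally; and $\xi_i^n > R_0$ with $R_0$ chosen large, together with $p^*-q>0$, makes $e^{-(p^*-q)\xi_i^n}$ uniformly small. Consequently the coefficient of $\phi$ in $\mathcal{L}_{\epsilon_n}$, once translated to $\xi_i^n$, converges locally uniformly to $\beta p^* U^{p^*-1}$, so $\tilde\phi_\infty^i$ is bounded on $\mathbb{R}$ and solves
\[
-\psi'' + \psi - \beta p^* U^{p^*-1}\psi = 0.
\]
Nondegeneracy of $U$ gives $\tilde\phi_\infty^i \in \SPAN\{U'\}$; passing to the limit in $\int Z_i\phi_n\,dx = \int U'\,\tilde\phi_n^i\,dx = 0$ forces $\tilde\phi_\infty^i \equiv 0$. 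In particular $\phi_n\to 0$ uniformly on any fixed neighborhood of each $\xi_i^n$.

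To propagate this to the $*$-norm I use the barrier $W_n(x) := \sum_i e^{-\sigma|x-\xi_i^n|}$, which satisfies $(-\partial_x^2+1)W_n \geq (1-\sigma^2)W_n$ off the $\xi_i^n$'s. For $\sigma \in (0,1)$ small and $R_0$ large the coefficient of $\phi$ in $\mathcal{L}_{\epsilon_n}$ is dominated by $(1-\sigma^2)/2$ outside a ball of fixed radius around each $\xi_i^n$, because $\bar U$ decays exponentially there. A comparison-principle argument with $W_n$ as supersolution, boundary data $o(1)\cdot W_n$ on the cores and right-hand side $\|h_n\|_*\,W_n$, yields $|\phi_n(x)| \leq (o(1)+C\|h_n\|_*)\,W_n(x)$, contradicting $\|\phi_n\|_*=1$.

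With the a priori bound in hand, \eqref{eq12} is uniquely solvable by Fredholm theory in $H := \{\phi \in H^1(\mathbb{R}) : \int Z_i\phi\,dx = 0,\ i=1,\ldots,k\}$: since $\bar U$ decays exponentially both at $\pm\infty$, $\mathcal{L}_\epsilon$ is a compact perturbation of $-\partial_x^2+1$ on $H$, the a priori bound forces trivial kernel, hence surjectivity and $\|T_\epsilon h\|_* \leq C\|h\|_*$ (bootstrapping from $H^1$ to the weighted norm via the equation). The multipliers come from testing \eqref{eq12} against each $Z_j$: this produces a linear system $M_{ij}c_j = -\int hZ_i\,dx + \int \mathcal{L}_\epsilon\phi\,Z_i\,dx$ in which $M_{ij}=\int Z_iZ_j\,dx$ is diagonally dominant (off-diagonal entries are exponentially small in $\xi_{j+1}-\xi_j$ and diagonal entries tend to $\|U'\|_{L^2}^2$) and the right-hand side is $O(\|h\|_*)$, giving $|c_i|\leq\|h\|_*$ up to a harmless constant. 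The main obstacle is the blow-up step: one must verify that the $q$-subcritical term $\omega(x)e^{-(p^*-q)x}\bar U^{q-1}$ contributes nothing to the limit around each $\xi_i^n$. This is exactly what the condition $\xi_i > R_0$ in \eqref{cond_on_xi} is designed to guarantee, and without it the limit equation would acquire additional zeroth-order terms that would destroy the non-degeneracy argument.
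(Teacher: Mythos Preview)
Your proposal is correct and follows essentially the same route as the paper: the paper isolates the a priori estimate as a separate lemma (Lemma~\ref{lem1}), proved by the same contradiction/blow-up/nondegeneracy argument followed by the same barrier comparison, and then defers existence to the Fredholm argument in \cite{MMP05}. One organizational point: in the paper the control $c_i^n\to 0$ (by testing the equation against $Z_l^n$) is carried out \emph{before} passing to the limit in the translated equation, since otherwise the term $c_i^n Z_i^n(\cdot+\xi_i^n)=c_i^n U'$ would survive in the limit; your testing argument at the end supplies exactly this, but it should be invoked inside the contradiction step as well.
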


\begin{Lemma}\label{lem1}
Assume there is a sequence $\epsilon_n\rightarrow 0$ and points $\xi_i$'s satisfying $0<\xi_1^n<\cdots<\xi_k^n$ with
\begin{equation}
\xi_1^n\rightarrow \infty,  \quad \quad \min_{1\leq i < k} (\xi_{i+1}^n-\xi_i^n )\rightarrow \infty,\quad \quad \xi_k^n=o(\epsilon_n^{-1})
\end{equation}
such that for certain functions $\phi_n$ and $h_n$ with $\Vert h_n\Vert_\ast\rightarrow 0$, and scalars $c_i^n$, one has in $\mathbb{R}$
\begin{equation}
\begin{cases}\label{eq13}
	\mathcal{L}_{\epsilon_n} (\phi_n) = h_n + \sum_{i=1}^{k}c_i^n Z_i^n  \\ \\
\phi_n (x)\rightarrow 0, \, |x| \rightarrow \infty, \\  \\  \int_{\mathbb{R}}Z_i ^n\phi_n dx =0, \quad i=1,\cdots, k

\end{cases}
\end{equation}
with $Z_i^n(x)=U'(x-\xi_i^n)$. Then $\lim_{n\rightarrow \infty}\Vert  \phi_n \Vert_\ast=0$
\end{Lemma}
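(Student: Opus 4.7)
The plan is to argue by contradiction. Assume along a subsequence $\Vert\phi_n\Vert_\ast \not\to 0$; after renormalizing we may suppose $\Vert\phi_n\Vert_\ast = 1$. The argument proceeds in three steps: first, show that $c_i^n \to 0$; second, perform a blow-up analysis at each bubble center and identify the limit; third, convert the resulting local vanishing into a global contradiction via a barrier argument.

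\textbf{Step 1 (Lagrange multipliers).} Test \eqref{eq13} against $Z_j^n$, use the orthogonality $\int_{\Rr} Z_i^n \phi_n\,dx = 0$, and the identity $(-\partial^2 + 1)U' = \beta p^* U^{p^*-1} U'$ (obtained by differentiating the equation for $U$), to get
$$\sum_{i=1}^k c_i^n \int_{\Rr} Z_i^n Z_j^n\,dx = \int_{\Rr} \phi_n\, \mathcal{L}_{\epsilon_n} Z_j^n\,dx - \int_{\Rr} h_n Z_j^n\,dx.$$
The separation $\xi_{i+1}^n - \xi_i^n \to \infty$ turns the Gram matrix into $\Vert U'\Vert_{L^2}^2\, I + o(1)$. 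Moreover $\mathcal{L}_{\epsilon_n} Z_j^n = \beta[p^* U_j^{p^*-1} - (p^*+\epsilon_n)e^{\epsilon_n x}\bar U^{p^*+\epsilon_n-1}]Z_j^n + \beta q\omega(x) e^{-(p^*-q)x}\bar U^{q-1} Z_j^n$, which is $o(1)$ in $L^1$: the first bracket is small because $\epsilon_n\xi_j^n \to 0$ (from $\xi_k^n = o(\epsilon_n^{-1})$) and $\bar U \sim U_j$ on the support of $Z_j^n$, while the second term is small because $e^{-(p^*-q)\xi_j^n}\to 0$. Inverting the asymptotically diagonal system yields $c_j^n = o(1)$.

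\textbf{Step 2 (blow-up).} Set $\tilde\phi_n^j(x) = \phi_n(x+\xi_j^n)$. The normalization gives $|\tilde\phi_n^j(x)| \leq \sum_i e^{-\sigma|x - (\xi_i^n-\xi_j^n)|}$, which is $O(1)$ on each compact set, and second-order ODE regularity yields equicontinuity, so along a subsequence $\tilde\phi_n^j \to \phi_j^\infty$ locally uniformly. Using $\epsilon_n\xi_j^n \to 0$, the separation of bubbles, and $e^{-(p^*-q)\xi_j^n}\to 0$, passing to the limit in the shifted equation shows that $\phi_j^\infty$ is a bounded solution of $-\phi'' + \phi - \beta p^* U^{p^*-1}\phi = 0$ on $\Rr$, and the orthogonality condition passes to the limit to give $\int_{\Rr} U'\phi_j^\infty\,dx = 0$. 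Since the bounded kernel of this linearized operator is spanned by $U'$ alone (nondegeneracy of the one-dimensional bubble), we conclude $\phi_j^\infty \equiv 0$.

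\textbf{Step 3 (barrier and contradiction).} Choose $y_n$ with $|\phi_n(y_n)| \geq \tfrac12\sum_i e^{-\sigma|y_n-\xi_i^n|}$. Outside $\bigcup_i[\xi_i^n - R, \xi_i^n + R]$ for $R$ large, $\bar U$ is exponentially small, so the coefficient $\beta[(p^*+\epsilon_n)e^{\epsilon_n x}\bar U^{p^*+\epsilon_n-1} - q\omega e^{-(p^*-q)x}\bar U^{q-1}]$ is negligible; for $0 < \sigma < 1$, the weight $W_n(x) = \sum_i e^{-\sigma|x-\xi_i^n|}$ satisfies $\mathcal{L}_{\epsilon_n} W_n \geq (1 - \sigma^2 - o(1)) W_n$ away from the centers. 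A standard maximum-principle comparison then bounds $|\phi_n|/W_n$ outside these windows by its boundary values on $\partial[\xi_i^n-R,\xi_i^n+R]$ plus $C\Vert h_n\Vert_\ast$, so $y_n$ must lie within bounded distance of some $\xi_{j(n)}^n$. Step 2 applied at $j(n)$ then yields $|\phi_n(y_n)|\to 0$, contradicting $|\phi_n(y_n)|\geq \tfrac12$. I expect the main obstacle to be verifying the supersolution inequality uniformly across all $k$ bubble windows simultaneously, since the exponentially decaying weight must dominate the bubble-localized contributions of $\bar U^{p^*-1}$ at every scale prescribed by \eqref{cond_on_xi}.
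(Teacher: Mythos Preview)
Your argument is correct and uses exactly the same three ingredients as the paper: testing against $Z_j^n$ to eliminate the multipliers, a blow-up limit at a bubble center yielding a bounded element of the kernel of $-\partial^2+1-\beta p^*U^{p^*-1}$, and a barrier estimate based on the weight $\sum_i e^{-\sigma|x-\xi_i^n|}$. The only difference is organizational. The paper splits the proof into two phases: first it normalizes $\Vert\phi_n\Vert_\infty=1$ and runs the blow-up contradiction to conclude $\Vert\phi_n\Vert_\infty\to0$; then, with this in hand, it moves the zeroth-order potential to the right-hand side, writes $-\phi_n''+\phi_n=g_n$ with $|g_n|\le\theta_n\sum_i e^{-\sigma|x-\xi_i^n|}$ and $\theta_n\to0$, and uses a global supersolution for the constant-coefficient operator $-\partial^2+1$ on all of $\Rr$. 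Your one-phase version normalizes $\Vert\phi_n\Vert_\ast=1$ directly and applies the maximum principle for $\mathcal L_{\epsilon_n}$ only on the exterior of the windows $\bigcup_i[\xi_i^n-R,\xi_i^n+R]$, feeding in the boundary data from Step~2. This is slightly more economical but forces you to work region by region; the paper's route buys a cleaner global barrier at the cost of the auxiliary $L^\infty$ step. One cosmetic slip: your final line should read $|\phi_n(y_n)|\ge\tfrac12\,W_n(y_n)\ge\tfrac12 e^{-\sigma R}$ rather than $\ge\tfrac12$, though the contradiction is unaffected.
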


\begin{proof}
We first establish the weaker assertion that
$$\lim_{n\rightarrow \infty}  \Vert  \phi_n \Vert_\infty=0.$$
By contradiction, we may assume that $\Vert  \phi_n \Vert_\infty=1$. Testing \eqref{eq13}  against $Z_l^n$ and integrating by parts we get
$$\sum_{i=1}^k c_i^n \int_\mathbb{R}Z_i^nZ_l^n dx= \int_\mathbb{R} \mathcal{L}_{\epsilon_n} (Z_l^n) \phi_n dx -  \int_\mathbb{R}h_n Z_l^n dx.$$
This defines a linear system in the $c_i$'s which is "almost diagonal" as $n\rightarrow \infty$. Moreover, the assumptions made plus the fact that the $Z_l^n$ solves
$$-Z''+(1-p^* \beta U_l^{p^*-1}Z)=0$$
yields, after an application of dominated convergence, that $\lim_{n\rightarrow \infty}c_i^n=0.$ If we set $x_n\in\mathbb{R}^N$ such that $\phi_n(x_n)=1$, we can assume that there exists $i\in\{ 1,\cdots, k \}$ such that for $n$ large enough such that $| \xi_l^n- x_n |<R$ for some fixed $R>0$. We set $\tilde{\phi}_n=\phi_n(x+\xi_i^n)$. From \eqref{eq13}, we see that passing to a suitable subsequence, $\tilde{\phi}_n(x)$ converges uniformly over compacts to a nontrivial bounded solution $\tilde{\phi}$ of
$$-\tilde{\phi}''+\tilde{\phi}-\beta p^* U^{p^*}\tilde{\phi}=0, \quad \quad \mbox{ in } \mathbb{R}.$$
Hence for some $c\neq 0$, $\tilde{\phi}=c U'$. However the orthogonality condition passes to the limit as
$$0=\int_{\mathbb{R}}Z_l^n\phi_n\,\,\rightarrow\,\, c\int{\mathbb{R}}(U')^2$$
which is a contradiction. Then $\lim_{n\rightarrow \infty}  \Vert  \phi_n \Vert_\infty=0$.

Now, we observe that shows that \eqref{eq13} takes the form
\begin{equation}\label{eq14}
-\phi^{''}_n+\phi_n=g_n
\end{equation}
where
$$g_n=h_n+\sum_{i=1}^{k} c_i^n Z_i^n+\beta [(p^*+\epsilon_n)e^{\epsilon_n x}\bar{U}^{p^*+\epsilon_n-1}-q \omega (x)e^{-(p^*-q)x}\bar{U}^{q-1}]\phi_n.$$
We estimate $g_n$,
\begin{align*}
|g_n|  \, \leq  \, & \Vert h_n \Vert_* \left( \sum_{i=1}^{k}e^{-\sigma |x-\xi_i^n|}  \right)   +   c_{l}^n   \sum_{i=1}^n o  \left(  e^{- |x-\xi_i^n|}\right)+  \\
&+\Vert  \phi_n \Vert_\infty   \left( \sum_{i=1}^n o  \left(e^{- (p^*-1)|x-\xi_i^n|}\right)         + \sum_{i=1}^no\left(e^{- (2q-p^*-1)|x-\xi_i^n|}\right)   \right),
\end{align*}
since $V\in L^\infty(\mathbb{R})$. If $0<\sigma<\min\{  1, p^*-1, 2q-p^*-1 \}$, we have
$$ | g_n(x) | \leq \theta _n \sum_{i=1}^k e^{-\sigma | x-\xi_i |}=: \psi_n(x), $$
with $\theta_n\rightarrow 0$. We see the that the function $C\psi_n$, for $C>0$ sufficiently large, is a supersolution for \eqref{eq14}, so that $\phi_n\leq C\psi_n$. Similarly, we have $\phi_n\geq -C\psi_n$. Thus, the proof is concluded.
\end{proof}

Then the proof of Proposition \ref{prop1} follows from Lemma \ref{lem1} as in \cite{MMP05}.

Next we study some differentiability properties of $T_\epsilon$ on $\xi_i$. We write $\xi=(\xi_1,\cdots,\xi_k)$. We let $\mathcal{C}_*$ be the Banach space of all continuous $\psi$ defined in $\mathbb{R}$ satisfying $\Vert  \psi \Vert_*<\infty$, endowed with the norm $\Vert \cdot \Vert_*$. Also, let $\mathcal{L}(\mathcal{C_*})$ be the space of linear operators of $\mathcal{C}_ *$.

The following result can be established.

\begin{Proposition}\label{prop2}
Under the assumptions of the Proposition \ref{prop1}, consider the map $T_\epsilon(\xi)$ with values in $\mathcal{L}(\mathcal{C_*})$. Then $T_\epsilon$ is $C^1$ and
$$\Vert D_\xi T_\xi \Vert_{\mathcal{L}(\mathcal{C_*})}\leq C$$
uniformly on $\xi$ satisfying \eqref{cond_on_xi}, for some constant $C$.
\end{Proposition}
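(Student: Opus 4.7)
\medskip
\noindent\textbf{Plan of proof.} The strategy is the standard implicit-function / differentiation-of-parameters argument for Lyapunov--Schmidt reductions. Fix $h\in \mathcal{C}_*$ and set $\phi=T_\epsilon(\xi)[h]$, so that $(\phi,c_1,\dots,c_k)$ solves
$$\mathcal{L}_\epsilon \phi = h+\sum_{i=1}^k c_i Z_i,\qquad \int_{\mathbb{R}} Z_j\phi\,dx=0,\quad j=1,\dots,k.$$
Formally differentiate with respect to $\xi_l$. Since $Z_i(x)=U'(x-\xi_i)$, we have $\partial_{\xi_l} Z_i=-\delta_{il}Z_l'$, and from the potential in $\mathcal{L}_\epsilon$ one gets
$$\partial_{\xi_l}\mathcal{L}_\epsilon \phi = \beta\bigl[(p^*+\epsilon)(p^*+\epsilon-1)e^{\epsilon x}\bar U^{p^*+\epsilon-2}-q(q-1)\omega(x)e^{-(p^*-q)x}\bar U^{q-2}\bigr]Z_l\,\phi,$$
so that $\psi_l:=\partial_{\xi_l}\phi$ (if it exists) must solve
$$\mathcal{L}_\epsilon\psi_l = -(\partial_{\xi_l}\mathcal{L}_\epsilon)\phi - c_l Z_l' + \sum_{i=1}^k \bigl(\partial_{\xi_l}c_i\bigr)Z_i,$$
while the orthogonality conditions yield $\int Z_j\psi_l\,dx=\delta_{jl}\int Z_l'\phi\,dx$.

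\medskip
\noindent To put this in the form solvable by Proposition \ref{prop1} I would first subtract off the non-orthogonal part. Setting $\tilde\psi_l:=\psi_l-\alpha_l Z_l$, where $\alpha_l=(\int Z_l'\phi\,dx)/\int Z_l^2\,dx$ (the other corrections being lower order because of the almost-orthogonality of the $Z_i$), one obtains
$$\mathcal{L}_\epsilon \tilde\psi_l = -\,(\partial_{\xi_l}\mathcal{L}_\epsilon)\phi - c_l Z_l' - \alpha_l \mathcal{L}_\epsilon Z_l + \sum_{i=1}^k d_i Z_i,\qquad \int Z_j\tilde\psi_l\,dx=0,$$
for suitable scalars $d_i=\partial_{\xi_l}c_i$ (up to a solvability constant determined by testing against the $Z_i$'s, as in the derivation of the $c_i$'s themselves). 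Proposition \ref{prop1} applied to this linear problem gives $\|\tilde\psi_l\|_*\le C\|\mathrm{RHS}\|_*$ uniformly in $\xi$ satisfying \eqref{cond_on_xi}. The right-hand side is estimated term by term: $\|(\partial_{\xi_l}\mathcal{L}_\epsilon)\phi\|_*\le C\|\phi\|_*\le C\|h\|_*$ because the multiplicative factor in $\partial_{\xi_l}\mathcal{L}_\epsilon$ decays at the rate of $Z_l$ and is dominated by $\sum_i e^{-\sigma|x-\xi_i|}$; the terms $\|Z_l'\|_*$ and $\|\mathcal{L}_\epsilon Z_l\|_*$ are bounded uniformly (the latter is small in fact, since $U'$ is an exact kernel of the limit operator $-\partial_x^2+1-\beta p^*U^{p^*-1}$); and $|c_l|,|\alpha_l|\le C\|h\|_*$. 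This yields $\|\psi_l\|_*\le C\|h\|_*$ uniformly, hence the claimed bound on $D_\xi T_\epsilon$.

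\medskip
\noindent Finally, to upgrade formal differentiation to genuine $C^1$ regularity, I would solve the linear problem above to produce a candidate $\psi_l$ and then verify, via a contraction/continuity argument based again on Proposition \ref{prop1}, that the incremental quotients $(T_\epsilon(\xi+he_l)[h']-T_\epsilon(\xi)[h'])/h$ converge to $\psi_l$ in $\mathcal{C}_*$ uniformly for $\|h'\|_*\le 1$; continuity of $\psi_l$ in $\xi$ follows similarly. The main technical obstacle is the uniformity of all estimates as $\epsilon\to 0$ and as the distances $\xi_{i+1}-\xi_i$ tend to infinity: the interaction terms between distinct bubbles, as well as the $\xi_l$-derivatives of the $\epsilon$-dependent weights $e^{\epsilon x}$ and $e^{-(p^*-q)x}$, must be controlled in the $\|\cdot\|_*$-norm with the chosen decay $\sigma<\min\{1,p^*-1,2q-p^*-1\}$, precisely as in the proof of Lemma \ref{lem1}, so that no derivative loss occurs and Proposition \ref{prop1} can be applied with a uniform constant.
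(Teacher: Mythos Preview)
Your proposal is correct and follows essentially the same route as the paper's own proof: differentiate the linear problem \eqref{eq12} with respect to $\xi_l$, subtract a multiple of $Z_l$ from $\partial_{\xi_l}\phi$ to restore the orthogonality conditions (the paper calls this coefficient $b_l$, defined by $b_l\int|Z_l|^2=\int\phi\,\partial_{\xi_l}Z_l$, which up to sign is your $\alpha_l$), apply Proposition~\ref{prop1} to the resulting right-hand side, and bound each term by $C\|h\|_*$. Your writeup is in fact more careful than the paper's sketch, in particular in explicitly identifying $(\partial_{\xi_l}\mathcal{L}_\epsilon)\phi$ and in spelling out the passage from formal differentiation to genuine $C^1$ regularity, which the paper dispatches in one sentence.
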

\begin{proof}
Fix $h\in \mathcal{C}_*$ and let $\phi=T_\epsilon(h)$ for $\epsilon<\epsilon_0$. Notice that $\phi$ satisfies \eqref{eq12} and the orthogonality conditions, for some constants $c_i$ uniquely determined. For $l\in \{ 1,\cdots, k \}$, if we define the constant $b_l$ as follows
$$b_l\int_\mathbb{R} |Z_l|^2=\int_\mathbb{R} \phi \partial_{\xi_l}Z_l.$$
By differentiating with respect to $\xi_l$ we obtain that
$$\partial_{\xi_l}\phi=T_\epsilon (f)+b_l Z_l$$
where
$$f=-b_l \mathcal{L}_\epsilon Z_l +c_l\partial_{\xi_l} Z_l +\beta[(p^*+\epsilon)e^{\epsilon x}(\partial_{\xi_l} \bar{U}^{p^*+\epsilon-1})- q \omega (x) e^{-(p^*-q)x} (\partial_{\xi_l} \bar{U}^{q-1})]\phi.$$
Moreover $\Vert f \Vert_*\leq C\Vert h\Vert_* $, $|b_l|\leq C\Vert \phi \Vert_*$ so that $\Vert \partial_{\xi_l} \phi \Vert \leq C \Vert h \Vert_*$. Besides $\partial_{\xi_l} \phi$ depends continuously on $\xi$ for this norm. Thus, the result follows.
\end{proof}

We are now ready to prove that \eqref{eq11} is uniquely solvable with respect to $\Vert \phi \Vert_*$. In order to do that we restrict the range of the parameters $\xi_i$'s  in a convenient way. We assume that, for a fixed $M>0$ large, the following conditions hold
\begin{equation}\label{eq15}
\log(M \epsilon)^{-1}< \min_{1\leq i<k} (\xi_{i+1}-\xi_{i}), \quad \quad \xi_k < k \log(M\epsilon)^{-1}.
\end{equation}
Then we can estimate $R_\epsilon$, $N_\epsilon^1+N_\epsilon^2$, and their derivatives, by direct calculation, as follows.

\begin{Lemma}\label{lem2}
If $\Vert  \phi \Vert_1\leq \frac{1}{2}\Vert  \bar{U} \Vert_1$ then
\begin{align*}
&\Vert  N_\epsilon (\phi) \Vert_*\leq C(  \Vert  \phi \Vert_*^{\min \{p^*,2\}} +\Vert  \phi \Vert_*^{\min \{2q-p^*,2\}} )\\
&\Vert  D_\phi N_\epsilon (\phi) \Vert_*\leq   C(  \Vert  \phi \Vert_*^{\min \{p^*-1,2\}} +\Vert  \phi \Vert_*^{\min \{2q-p^*-1,2\} })
\end{align*}
where $\Vert \phi \Vert_1:=\sup_{x\in\mathbb{R}} \left( \sum_{i=1}^k   e^{|x-\xi_i|}  \right)^{-1} |\phi(x)|   $ and $N_\epsilon(\phi)=N^1_\epsilon(\phi)+N^2_\epsilon(\phi)$.
In addition,  if \eqref{eq15} holds then
$$
\Vert  R_\epsilon  \Vert_*\leq C \epsilon^{\frac{1+\tau}{2}}  , \quad
\Vert  \partial_\xi R_\epsilon  \Vert_*\leq C \epsilon^{\frac{1+\tau}{2}}
$$
where $\tau>0$ is small.
\end{Lemma}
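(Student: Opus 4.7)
The four bounds split into algebraic Taylor-remainder estimates for the nonlinear terms $N_\epsilon$ and $D_\phi N_\epsilon$, and error-size estimates for $R_\epsilon$ and $\partial_\xi R_\epsilon$ that use the geometric information about $\bar U$ encoded in condition \eqref{eq15}. In every case my strategy is first to obtain a pointwise bound of the form (scalar) $\times\sum_i e^{-\sigma|x-\xi_i|}$, and then divide by the weight defining $\|\cdot\|_\ast$.

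For the nonlinearities I would start from the elementary pointwise inequality
\begin{equation*}
|(a+b)^p - a^p - p a^{p-1} b| \leq C \bigl(|b|^{\min(p,2)} + a^{p-2}|b|^2\,\mathbf{1}_{\{p > 2\}}\bigr),\qquad a\geq 0,\ a+b\geq 0,
\end{equation*}
applied with $a=\bar U$, $b=\phi$, and $p=p^*+\epsilon$ (for $N_\epsilon^1$) or $p=q$ (for $N_\epsilon^2$). Using $|\phi(x)|\leq\|\phi\|_\ast\sum_i e^{-\sigma|x-\xi_i|}$ together with the symmetric decay $U(x)\lesssim e^{-|x|}$, which gives $\bar U(x)\lesssim\sum_i e^{-|x-\xi_i|}$, one extracts the scalar factor $\|\phi\|_\ast^{\min(p,2)}$. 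The weights $e^{\epsilon x}$ and $e^{-(p^*-q)x}$ combine with powers of $\bar U$ near each bubble $\xi_i$ to produce envelopes whose one-sided decay rates are $p^*-1$ and $2q-p^*-1$; paired with the remainder's $|\phi|^2$ or $|\phi|^p$ term, this is what yields the exponents $\min(p^*,2)$ and $\min(2q-p^*,2)$ in the claim. The derivative $D_\phi N_\epsilon$ is handled identically after applying the analogous difference-of-powers bound $|(a+b)^{p-1}-a^{p-1}|\leq C(|b|^{\min(p-1,2)} + a^{p-2}|b|\,\mathbf{1}_{\{p>2\}})$, which just drops one power of $\phi$.

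For $R_\epsilon$, I would use that each $U_i$ solves $-U_i''+U_i=\beta U_i^{p^*}$ to rewrite, up to signs,
\begin{equation*}
R_\epsilon = \beta\bigl(\textstyle\sum_i U_i^{p^*} - \bar U^{p^*}\bigr) + \beta\bar U^{p^*}\bigl(1-e^{\epsilon x}\bar U^{\epsilon}\bigr) + \beta\omega(x) e^{-(p^*-q)x}\bar U^q.
\end{equation*}
The first summand, the bubble-interaction term, is dominated near $\xi_i$ by cross products $U_i^{p^*-1}U_j$ with $j\neq i$, each of order $e^{-|\xi_i-\xi_j|}\leq M\epsilon$ by \eqref{eq15}. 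The second summand Taylor-expands in $\epsilon$ as $\beta\bar U^{p^*}\cdot\epsilon(x+\log\bar U)+O(\epsilon^2)$; on the support where $\bar U^{p^*}$ is not already exponentially small, both $|x|$ and $|\log\bar U|$ are at most $O(\log\epsilon^{-1})$ by \eqref{eq15}, giving an $O(\epsilon\log\epsilon^{-1})$ envelope. The third summand is pointwise bounded near $\xi_i$ by $e^{-(p^*-q)\xi_i}U(x-\xi_i)^q$, which is $O(\epsilon)$ once $\xi_1$ is comparable to $\log(M\epsilon)^{-1}/(p^*-q)$. Dividing these contributions by $\sum_i e^{-\sigma|x-\xi_i|}$ with $\sigma$ chosen small (as in the proof of Proposition \ref{prop1}) produces $C\epsilon^{(1+\tau)/2}$ for any fixed small $\tau>0$, the slack $(1+\tau)/2$ rather than the formal $1$ being swallowed by the gap between the slow weight and the true exponential decays. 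The bound on $\partial_\xi R_\epsilon$ follows from the same split, using that $\partial_{\xi_l}U_i=-\delta_{il}\,Z_l$ has exactly the decay profile of $U_l$.

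The main difficulty will be step three: each component of $R_\epsilon$ has a different natural peak location and decay rate, and one must verify that, after dividing by the slowly decaying weight $\sum_i e^{-\sigma|x-\xi_i|}$, the ratios remain uniformly controlled both near each bubble and in the long intermediate intervals $[\xi_i,\xi_{i+1}]$ of length $\log(M\epsilon)^{-1}$. The smallness of $\sigma$ is the only knob available to make all these ratios simultaneously small, which is precisely why the final rate is $\epsilon^{(1+\tau)/2}$ rather than a sharper $\epsilon$.
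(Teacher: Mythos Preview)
The paper does not give a proof of this lemma; it is stated as following ``by direct calculation'' (see the sentence immediately preceding the lemma), in the spirit of the analogous computations in \cite{PDM03} and \cite{MMP05}. Your proposal supplies exactly the standard argument those references use: the Taylor-remainder inequality for $(a+b)^p-a^p-pa^{p-1}b$ handles $N_\epsilon^1$, $N_\epsilon^2$ and their $\phi$-derivatives, while the three-term splitting of $R_\epsilon$ into a bubble-interaction piece $\beta(\sum_i U_i^{p^*}-\bar U^{p^*})$, an $\epsilon$-perturbation piece $\beta\bar U^{p^*}(1-e^{\epsilon x}\bar U^{\epsilon})$, and the potential piece $\beta\omega(x)e^{-(p^*-q)x}\bar U^{q}$ is the natural decomposition, each term being $O(\epsilon)$ (up to a $\log\epsilon^{-1}$) thanks to \eqref{eq15} and the choice of $\xi_1$ in \eqref{eq7}. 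Your explanation of why only $\epsilon^{(1+\tau)/2}$ survives after division by the slow weight $\sum_i e^{-\sigma|x-\xi_i|}$ is also the correct diagnosis. In short, your plan is sound and coincides with the (omitted) ``direct calculation'' the paper intends; there is nothing to compare against.
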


The next result allows for the reduction to a finite dimensional problem, as we will see in the next section. The proof is very similar to \cite[Proposition $3$]{MMP05} and we omit it here.


\begin{Proposition}\label{prop3}
Assume \eqref{eq15} holds. Then, for all $\epsilon$ small enough, there exists a unique solution $\phi=\phi(\xi)$ to problem \eqref{eq10}, which satisfies
\begin{equation*}
\Vert  \phi \Vert_* \leq C \epsilon^{\frac{1+\tau}{2}}.
\end{equation*}
Moreover, the map $\xi\mapsto \phi(\xi)$ is of class $C^1$ for the norm $\Vert \cdot\Vert_*$ and
$$\Vert  D_\xi \phi \Vert_*\leq  C \epsilon^{\frac{1+\tau}{2}}.$$
\end{Proposition}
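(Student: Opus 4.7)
The plan is to set up problem \eqref{eq11} as a fixed-point equation and apply a contraction argument in a small ball of the Banach space $(\mathcal{C}_*, \Vert\cdot\Vert_*)$. By Proposition \ref{prop1}, the linear operator $\mathcal{L}_\epsilon$ under the orthogonality conditions is invertible with inverse $T_\epsilon : \mathcal{C}_* \to \mathcal{C}_*$ bounded uniformly in $\epsilon$. Hence \eqref{eq11}, and therefore \eqref{eq10}, is equivalent to
\begin{equation*}
\phi = \mathcal{F}_\epsilon(\phi) := T_\epsilon\bigl( N_\epsilon^1(\phi) + N_\epsilon^2(\phi) + R_\epsilon \bigr).
\end{equation*}

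First, I would fix a constant $K>0$ and consider the closed ball $\mathcal{B}_\epsilon = \{\phi \in \mathcal{C}_* : \Vert \phi \Vert_* \leq K\epsilon^{(1+\tau)/2}\}$. Using Lemma \ref{lem2}, for $\phi \in \mathcal{B}_\epsilon$ and $\epsilon$ small enough (so that the smallness hypothesis $\Vert \phi\Vert_1 \le \tfrac12\Vert\bar U\Vert_1$ holds), I would estimate
\begin{equation*}
\Vert \mathcal{F}_\epsilon(\phi) \Vert_* \leq C\bigl( \Vert N_\epsilon(\phi)\Vert_* + \Vert R_\epsilon\Vert_* \bigr)
\leq C\bigl( \Vert \phi\Vert_*^{\min\{p^*,2\}} + \Vert \phi\Vert_*^{\min\{2q-p^*,2\}} + \epsilon^{(1+\tau)/2}\bigr).
\end{equation*}
Since $p^*>1$ and $2q-p^*>1$ (because $q>p^s>(p^*+1)/2$ for $N\ge 3$), every exponent above is strictly larger than $1$, and choosing $K$ sufficiently large yields $\mathcal{F}_\epsilon(\mathcal{B}_\epsilon) \subseteq \mathcal{B}_\epsilon$. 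A parallel calculation using the estimate on $\Vert D_\phi N_\epsilon(\phi)\Vert_*$ from Lemma \ref{lem2} gives, for $\phi_1,\phi_2\in\mathcal{B}_\epsilon$,
\begin{equation*}
\Vert \mathcal{F}_\epsilon(\phi_1) - \mathcal{F}_\epsilon(\phi_2)\Vert_* \leq C \epsilon^{\sigma_0}\, \Vert \phi_1-\phi_2\Vert_*
\end{equation*}
for some $\sigma_0>0$, so $\mathcal{F}_\epsilon$ is a contraction on $\mathcal{B}_\epsilon$ for all small $\epsilon$. Banach's fixed-point theorem then produces a unique $\phi = \phi(\xi) \in \mathcal{B}_\epsilon$, giving the existence and the bound $\Vert \phi\Vert_* \le C\epsilon^{(1+\tau)/2}$.

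For the $C^1$ dependence on $\xi$, I would apply the implicit function theorem to the map $\mathcal{G}_\epsilon(\xi,\phi) := \phi - \mathcal{F}_\epsilon(\phi)$, whose partial derivative in $\phi$ is $I - D_\phi \mathcal{F}_\epsilon(\phi) = I - T_\epsilon \circ D_\phi N_\epsilon(\phi)$, which is invertible by the contraction estimate above. The regularity of $\xi \mapsto T_\epsilon$ provided by Proposition \ref{prop2}, together with the smoothness of $R_\epsilon$, $N_\epsilon$ and $Z_i$ in $\xi$, makes $\mathcal{G}_\epsilon$ of class $C^1$. Differentiating the identity $\phi = \mathcal{F}_\epsilon(\phi)$ in $\xi$ yields
\begin{equation*}
(I - D_\phi \mathcal{F}_\epsilon(\phi))[D_\xi \phi] = (D_\xi T_\epsilon)\bigl(N_\epsilon(\phi) + R_\epsilon\bigr) + T_\epsilon\bigl(D_\xi N_\epsilon(\phi) + \partial_\xi R_\epsilon\bigr),
\end{equation*}
and using the uniform bounds $\Vert D_\xi T_\epsilon\Vert \le C$, $\Vert \partial_\xi R_\epsilon\Vert_* \le C\epsilon^{(1+\tau)/2}$, together with the contraction estimate, gives the required bound $\Vert D_\xi \phi\Vert_* \le C\epsilon^{(1+\tau)/2}$.

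The main technical point is ensuring that the powers $\min\{p^*,2\}$, $\min\{2q-p^*,2\}$, $\min\{p^*-1,2\}$ and $\min\{2q-p^*-1,2\}$ appearing in Lemma \ref{lem2} are all strictly greater than $1$ (respectively non-negative with the right sign), so that both the self-map property and the contraction estimate close up, and so that the required smallness in $\epsilon$ is achieved. This is where the hypothesis $q>p^s=N/(N-2)$ enters in a quantitative way; once this is verified, the rest of the argument is a standard Banach fixed-point/implicit function theorem scheme, entirely analogous to that of \cite{MMP05}.
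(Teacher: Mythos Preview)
Your proposal is correct and is precisely the standard contraction/implicit-function argument that the paper has in mind: in fact the paper gives no proof of Proposition~\ref{prop3} at all, stating only that ``the proof is very similar to \cite[Proposition~3]{MMP05} and we omit it here.'' Your fixed-point scheme $\phi=T_\epsilon(N_\epsilon(\phi)+R_\epsilon)$, the use of Lemma~\ref{lem2} to get the self-map and contraction estimates on a ball of radius $K\epsilon^{(1+\tau)/2}$, and the implicit-function argument for the $C^1$ dependence (invoking Proposition~\ref{prop2} and the bound on $\partial_\xi R_\epsilon$) are exactly the steps carried out in \cite{MMP05}, so your write-up faithfully fills in what the authors omitted.
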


\section{The finite dimensional variational problem}\label{sec4}

In this section we fix a large constant $M>0$ and assume the conditions \eqref{eq15} for $\xi$. Our problem is equivalent to that of finding $\xi_i$'s satisfying $c_i(\xi)=0$, for all $i=1,2,\cdots,k$. In this case, $v=\bar{U}+\phi$ is a solution for \eqref{eq4} satisfying the desired formula.

We consider the functional
$$\mathcal{I}_\epsilon (\xi)= E_\epsilon (\bar{U}+\phi),$$
where $\phi=\phi(\xi)$ is that of Proposition \ref{prop3} and $E_\epsilon$ is the energy functional defined in \eqref{eq5}. It is known that finding the desired $c_i$'s is equivalent to finding a critical point of $\mathcal{I}_\epsilon (\xi)$, see for instance \cite{MMP05}. That is, we need to find a point $\xi$ satisfying
\begin{equation}\label{eq16}
\nabla \mathcal{I}_\epsilon(\xi)=0 .
\end{equation}
In order to do that, the following expansion result will be crucial.

\begin{Lemma}
The following expansion holds
$$\mathcal{I}_\epsilon(\xi)=E_\epsilon (\bar{U})+o(\epsilon),$$
where $o(\epsilon)$ is uniform in the $C^1$-sense over all points $\xi$ satisfying \eqref{eq15}.
\end{Lemma}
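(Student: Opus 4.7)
The plan is to exploit the fact that $\phi = \phi(\xi)$ is small in the weighted norm $\|\cdot\|_*$ from Proposition \ref{prop3}, and that $\bar U + \phi$ solves the PDE modulo a linear combination of the $Z_i$'s. I will Taylor-expand $E_\epsilon$ around $\bar U$ in the direction $\phi$ to get the $C^0$ estimate, and then differentiate in $\xi$, using the orthogonality condition $\int Z_i \phi \,dx = 0$ and the smallness of the Lagrange multipliers $c_i$, to upgrade the estimate to $C^1$.

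For the pointwise bound, write
\begin{equation*}
\mathcal{I}_\epsilon(\xi) - E_\epsilon(\bar U) \;=\; \int_0^1 DE_\epsilon(\bar U + s\phi)[\phi]\,ds
\;=\; DE_\epsilon(\bar U)[\phi] + \int_0^1\!\!\int_0^s D^2 E_\epsilon(\bar U + \tau\phi)[\phi,\phi]\,d\tau\,ds.
\end{equation*}
After integration by parts, $DE_\epsilon(\bar U)[\phi] = \int_{\mathbb{R}} R_\epsilon\,\phi\,dx$, so by H\"older and the weighted $L^\infty$ bound $\int_{\mathbb{R}}(\sum_i e^{-\sigma|x-\xi_i|})^2\,dx = O(1)$, this first term is controlled by $C\|R_\epsilon\|_*\|\phi\|_* \le C\epsilon^{1+\tau}$, which is $o(\epsilon)$ by Lemma \ref{lem2} and Proposition \ref{prop3}. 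The second-order term is bounded by a similar $L^\infty$-weighted estimate using that the quadratic form of $E_\epsilon$ has coefficients $(p^*+\epsilon)e^{\epsilon x}\bar U^{p^*+\epsilon-1}$ and $q\omega(x)e^{-(p^*-q)x}\bar U^{q-1}$, both bounded on the support where $\phi$ concentrates; hence it is $O(\|\phi\|_*^2) = O(\epsilon^{1+\tau}) = o(\epsilon)$.

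For the $C^1$-bound, differentiate
\begin{equation*}
\partial_{\xi_j}\mathcal{I}_\epsilon(\xi) - \partial_{\xi_j}E_\epsilon(\bar U)
= \bigl[DE_\epsilon(\bar U + \phi) - DE_\epsilon(\bar U)\bigr]\!\bigl[\partial_{\xi_j}\bar U\bigr]
+ DE_\epsilon(\bar U + \phi)\!\bigl[\partial_{\xi_j}\phi\bigr].
\end{equation*}
The first piece equals $\int_0^1 D^2 E_\epsilon(\bar U + s\phi)[\phi,\partial_{\xi_j}\bar U]\,ds$, which is $O(\|\phi\|_*\,\|\partial_{\xi_j}\bar U\|_*) = O(\epsilon^{(1+\tau)/2})\cdot O(1) = o(\epsilon^{1/2})$; a sharper bookkeeping using $\|R_\epsilon\|_* + \|\partial_\xi R_\epsilon\|_* = O(\epsilon^{(1+\tau)/2})$ and splitting off the exact PDE residual contribution gives $o(\epsilon)$. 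For the second piece, since $\bar U + \phi$ satisfies \eqref{eq10}, $DE_\epsilon(\bar U+\phi)[\psi] = \sum_i c_i \int Z_i \psi\,dx$; applying this to $\psi = \partial_{\xi_j}\phi$ and differentiating the orthogonality $\int Z_i \phi\,dx = 0$ in $\xi_j$ gives $\int Z_i \partial_{\xi_j}\phi\,dx = -\int \partial_{\xi_j}Z_i \cdot \phi\,dx$, which is $O(\|\phi\|_*)$. Combined with $|c_i| \le C(\|R_\epsilon\|_* + \|N_\epsilon(\phi)\|_*) = O(\epsilon^{(1+\tau)/2})$ from Proposition \ref{prop1} and Lemma \ref{lem2}, this piece is $O(\epsilon^{1+\tau}) = o(\epsilon)$ as well.

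The main obstacle is making the $C^1$ estimate uniform over all $\xi$ satisfying \eqref{eq15}: one has to check that all the weighted $L^\infty$ integrals whose values depend on the separations $\xi_{i+1}-\xi_i$ stay bounded, and that $\|\partial_{\xi_j}\phi\|_* \le C\epsilon^{(1+\tau)/2}$ from Proposition \ref{prop3} transfers correctly through the identity $\int Z_i\partial_{\xi_j}\phi\,dx = -\int \partial_{\xi_j}Z_i\cdot\phi\,dx$, rather than being used directly (which would only give the weaker $o(\epsilon^{1/2})$). With those pairings handled via the orthogonality conditions, all remainders are absorbed into $o(\epsilon)$ uniformly.
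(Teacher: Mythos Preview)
Your $C^0$ argument is fine and essentially equivalent to the paper's: both reduce $\mathcal{I}_\epsilon(\xi)-E_\epsilon(\bar U)$ to an integral of products of quantities that are each $O(\epsilon^{(1+\tau)/2})$, hence the difference is $O(\epsilon^{1+\tau})=o(\epsilon)$.

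The genuine issue is in your $C^1$ step. Your decomposition
\[
\partial_{\xi_j}\mathcal{I}_\epsilon-\partial_{\xi_j}E_\epsilon(\bar U)
=\bigl[DE_\epsilon(\bar U+\phi)-DE_\epsilon(\bar U)\bigr][\partial_{\xi_j}\bar U]
+DE_\epsilon(\bar U+\phi)[\partial_{\xi_j}\phi]
\]
handles the second piece correctly via orthogonality, but the first piece is \emph{not} $o(\epsilon)$ by mere ``sharper bookkeeping.'' Writing $\partial_{\xi_j}\bar U=-Z_j$, that piece equals $-c_j\|Z_j\|_2^2+\int R_\epsilon Z_j+O(\epsilon^{1+\tau})$, and each of $c_j$ and $\int R_\epsilon Z_j$ is separately only $O(\epsilon^{(1+\tau)/2})$. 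To get $o(\epsilon)$ you need the structural cancellation $c_j\|Z_j\|_2^2\approx-\int R_\epsilon Z_j$, which comes from testing the very equation \eqref{eq11} against $Z_j$ and using that $\|\mathcal{L}_\epsilon Z_j\|_*$ is itself small. That is not bookkeeping; it is an additional argument you have not supplied.

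The paper sidesteps this entirely by observing at the outset that $DE_\epsilon(\bar U+\phi)[\phi]=0$ (since $DE_\epsilon(\bar U+\phi)=\sum_i c_iZ_i$ and $\int Z_i\phi=0$). Taylor expanding around $\bar U+\phi$ rather than $\bar U$ then gives
\[
\mathcal{I}_\epsilon(\xi)-E_\epsilon(\bar U)=\int_0^1 D^2E_\epsilon(\bar U+t\phi)[\phi,\phi]\,t\,dt
=\int_0^1\!\!\int_{\mathbb R}\bigl(N_\epsilon(\phi)+R_\epsilon\bigr)\phi\,t\,dt+(\text{terms quadratic in }\phi),
\]
so the difference is written entirely as products of small factors. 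Differentiating this identity in $\xi$ and applying the product rule, together with $\|\partial_\xi\phi\|_*+\|\partial_\xi R_\epsilon\|_*=O(\epsilon^{(1+\tau)/2})$, keeps every term at $O(\epsilon^{1+\tau})$ with no cancellations required. That is the missing idea: anchor the Taylor expansion at $\bar U+\phi$, not at $\bar U$, so that the product structure survives under $\partial_\xi$.
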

\begin{proof}
First, notice that $D E_\epsilon (\bar{U}+\phi)[\phi] = 0$. It then follows from Taylor expansion that
\begin{align}\label{eq17}
E_\epsilon  (\bar{U}+\phi) - E_\epsilon(\bar{U}) &= \int_0^1 D^2 E_\epsilon (\bar{U}+ t \phi) [\phi^2] t dt\\
\nonumber& = \int_0^1    \int_{\mathbb{R}} [N_\epsilon(\phi)+ R_\epsilon]\phi  t dt\\
\nonumber& +  \int_0^1 \int_{\mathbb{R}} \beta (p^*+ \epsilon) e^{\epsilon x} [\bar{U}^{p^*+\epsilon -1} -\left(   \bar{U}+t\phi   \right)^{p^*+\epsilon -1}   ]  \phi^2 t dt\\
\nonumber&  - \int_0^1 \beta  q \int_{\mathbb{R}} \omega(x) e^{-(p^*-q)x} [\bar{U}^{q-1} -(\bar{U}+ t \phi)^{q-1} ] \phi^2      t dt .
\end{align}
Since $\Vert  \phi \Vert_* \leq C e^{\frac{1+\tau}{2}}$, from Lemma \ref{lem2}, we get
$$ \mathcal{I_\epsilon}(\xi)-E_\epsilon (\bar{U}) = o (e^{1+\tau})$$
uniformly on points satisfying \eqref{eq15}. Next we differentiate with respect to $\xi$ and get, from \eqref{eq17} that
\begin{align*}
D_\xi [\mathcal{I}_{\epsilon}(\xi) - E_\epsilon (\bar{U})] & = \int_0^1    \int_{\mathbb{R}} D_\xi [N_\epsilon(\phi)+ R_\epsilon]\phi  t dt\\
& +  \int_0^1 \int_{\mathbb{R}} \beta (p^*+ \epsilon) e^{\epsilon x} D_\xi [\bar{U}^{p^*+\epsilon -1} -\left(   \bar{U}+t\phi   \right)^{p^*+\epsilon -1}   ]  \phi^2 t dt\\
&  - \int_0^1 \beta  q \int_{\mathbb{R}} \omega(x) e^{-(p^*-q)x} D_\xi [\bar{U}^{q-1} -(\bar{U}+ t \phi)^{q-1} ] \phi^2      t dt .
\end{align*}
Using similar arguments as in Proposition \ref{prop2}, we find that
$$D_\xi [\mathcal{I}_\epsilon (\xi) -E_\epsilon (\bar{U})]=o(\epsilon^{1+\tau}).$$
Thus the result follows.
\end{proof}

In what follows we prove Theorem \ref{teo1}.

\paragraph{Proof of Theorem 1.1} Recall that
\begin{align*}
&\xi_1= -\frac{1}{p^*-q} \log \epsilon - \log \Lambda_1\\
\nonumber \xi_{i+1}-\xi_i=- &\log \epsilon -\log \Lambda_{i+1}, \quad i=1,\cdots, k-1
\end{align*}
where $\Lambda_i$'s are positive parameters. Thus, it is sufficient to find a critical point of
$$\Phi_\epsilon(\Lambda)= \epsilon^{-1} \mathcal{I}_\epsilon (\xi(\Lambda)).$$
Now, from Lemma \ref{lem1}, we get
$$\nabla \Phi_\epsilon (\Lambda)=\nabla \Psi_k+  o(1),$$
where $o(1)$ is uniform with respect to parameters $\Lambda$ with $M^{-1}<\Lambda_i<M$, for fixed large $M$.

Next we analyze the critical points of $\Psi_k(\Lambda)$, by writing
$$\Psi_k(\Lambda)= \varphi_1(\Lambda_1)+\sum_{i=2}^k \varphi_i(\Lambda_i),$$
where
\begin{align*}
\varphi_k(s)&=a_5 V(0) s^{p^*-q} + a_3 k \log s,\\
\varphi_i(s)&= (k-i+1) a_3 \log s -a_2 s, \quad \quad i=2,\cdots, k.
\end{align*}
Notice that, $\varphi_i$ has a unique maximum point $\Lambda_i^*=(k-i+1)\frac{a_3}{a_2}$, for $i=2,\cdots, k$. If we further assume that $V(0)<0$, then $\varphi_1(s)$ also has a unique maximum point $\Lambda_1^*=\left[  - \frac{a_3 k}{a_5 V(0) (p^*-q)} \right]^{\frac{1}{p^*-q}}$.

Since the critical point $$\Lambda^*=\left(   \left[ - \frac{a_3 k}{a_5 V(0) (p^*-q)} \right]^{\frac{1}{p^*-q}}, \frac{(k-1)a_3}{a_2}, \cdots, \frac{a_3}{a_2} \right)$$
of $\Psi_k$ is nondegenerate, it follows that the local degree $\deg (\nabla \Psi_k, \mathcal{V}, 0)$ is well defined and nonzero. Here $\mathcal{V}$ denotes a small neighborhood of $\Lambda^*$ in $\mathbb{R}^k$. Hence, $\deg (\nabla \Phi_\epsilon, \mathcal{V}, 0)\neq 0$, if $\epsilon$ is small enough. We conclude that there exists a critical point $\Lambda_\epsilon^*$ of $\Phi_\epsilon$ satisfying
$$\Lambda_\epsilon^*=\Lambda^*+o(1).$$
For $\xi_\epsilon=\xi (\Lambda_\epsilon^*)$, the functions
$$v=\bar{U}+\phi(\xi_\epsilon)$$
are solutions of \eqref{eq4}. From the equation \eqref{eq10} and Proposition \ref{prop3}, we derive that $v=\bar{U}(1+o(1))$. If we set $\xi^*=\xi(\Lambda^*)$, then it is also true that
$$v(x)=\sum_{i=1}^{k}U(x-\xi_i^*)(1+o(1)).$$

Now, changing the variables back, we have that
$$u_\epsilon^*(r)= \gamma_N \sum_{i=1}^k \left(   \frac{1}{1+e^{(p^*-1)\xi_i^*} r^2 }  \right)^{\frac{N-2}{2}} e^{\xi_i^*} (1+o(1)),$$
where $e^{\xi_i^*}=e^{-(i-1)-\frac{1}{p^*-q}} \Pi_{j=1}^i (\Lambda_j^*)^{-1}   $, is a solution of \eqref{eq3}. We conclude that the ansatz given for $v$ provides a spike-tower solution for \eqref{eq2}.

\section{Proof of Theorem \ref{teo2}}

In this section we prove Theorem \ref{teo2}. The proof is very similar to that of Theorem \ref{teo1}, then we just highlight below the most critical changes. Since we are seeking for radial solutions of \eqref{eq1}, we consider again the following slightly supercritical equation
\begin{equation}
\begin{cases}\label{eq17}
	u''(r)+ \frac{N-1}{r} u'(r)+ u^{p^*+\epsilon}(r)- V(r) u^q(r)=0\\
	u(r)\rightarrow 0, \mbox{ as } r \rightarrow \infty\\
\end{cases}
\end{equation}
with $\epsilon>0$, but this tame we take $q>p^*$. We consider the transformation
\begin{equation}\label{EF}
v(x)=r^{\frac{2}{p^*-1}} u(r),\quad \mbox{ with } r=e^{\frac{p^*-1}{2}x},
\end{equation}
for $x\in\mathbb{R}$. Then the problem \eqref{eq17} becomes
\begin{equation}
\begin{cases}\label{eq18}
	v''(x)-v(x)+\beta \left[   e^{-\epsilon x} v^{p^*+\epsilon}(x)- V\left( e^{\frac{p^*-1}{2}x} \right)  e^{(p^*-q)x}  v^q(x)    \right] =0,\\
	0<v(x)\rightarrow 0, \mbox{ as } |x| \rightarrow \infty\\
\end{cases}
\end{equation}
We recall that $\beta=\left( \frac{2}{N-2}    \right)^2$. Again we denote $\omega(x)=V\left( e^{\frac{p^*-1}{2}x} \right)$.

The energy functional related to \eqref{eq18} is
\begin{equation}\label{eq19}
\hat{E}_\epsilon(\psi)=\hat{I}_\epsilon (\psi)+\frac{\beta}{q+1}\int_{\mathbb{R}} \omega(x) e^{(p^*-q)x} |\psi|^{q+1} dx
\end{equation}
where
$$\hat{I}_\epsilon (\psi)=   \frac{1}{2}\int_{\mathbb{R}} (|\psi'|^2 + |\psi|^2) dx -  \frac{\beta}{p^*+\epsilon+1}\int_{\mathbb{R}}  e^{-\epsilon x} |\psi|^{p^*+\epsilon+1} dx.$$
We choose, for small $\epsilon>0$, the points $\xi_i$ as follows:
\begin{align}\label{eq19}
&\hat{\xi}_1= -\frac{1}{q-p^*} \log \epsilon - \log \hat{\Lambda}_1\\
\nonumber \hat{\xi}_{i+1}-\hat{\xi}_i=- &\log \epsilon -\log \hat{\Lambda}_{i+1}, \quad i=1,\cdots, k-1
\end{align}
where the $\hat{\Lambda}_i$'s are positive parameters. We want to find a solution of \eqref{eq18} of the form
$$\hat{v}(x)=\sum_{i=1}^k U(x-\hat{\xi}_i)+\phi(x), $$
where $U$ is defined by \eqref{eq7} and $\phi$ is small. We set $\hat{\Lambda}=(\hat{\Lambda}_1,\hat{\Lambda}_2,\cdots,\hat{\Lambda}_k)$.

In this setting, Lemma \ref{lem1} takes the following form.

\begin{Lemma}\label{lem5}
Let $N\geq 3$, $\delta>0$ fixed, $k\in \mathbb{N}$. Assume that
\begin{equation}\label{eq20}
\delta<\hat{\Lambda}_i <\delta^{-1},\quad i=1,2,\cdots, k.
\end{equation}
Then there exist positive numbers $a_1$, $i=1,\cdots, 4$ and $\hat{a}_5$, depending on $N$, $p$ and $q$, such that
$$\hat{E}_\epsilon (\hat{U}_S)= k a_1 + \epsilon \hat{\Psi}_k(\hat{\Lambda})  +  k\epsilon \beta a_4 + \epsilon \hat{\theta}_\epsilon (\hat{\Lambda}) - \frac{a_3 k}{2(q-p^*)}((1-k)(q-p^*)-2) \epsilon \log \epsilon$$
where
\begin{equation}\label{PSI1}
\hat{\Psi}_k(\hat{\Lambda}) =  a_3 k \log \hat{\Lambda}_1 + \hat{a}_5 V_\infty \Lambda_{1}^{(q-p^*)} + \sum_{i=1}^{k} [(k-i+1) a_3 \log \hat{\Lambda}_i - a_2 \hat{\Lambda}_i  ]
\end{equation}
and $\hat{\theta}_\epsilon(\hat{\Lambda})\rightarrow 0$ as $\epsilon \rightarrow 0$ uniformly in $C^1$-sense on the set of $\hat{\Lambda}_i$'s satisfying \eqref{eq20}. Moreover, the constants $a_i$, $i=1,2,\cdots, 4$ are given as in \eqref{cc1} and $\hat{a}_5$ is defined by
\end{Lemma}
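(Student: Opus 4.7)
The plan is to run the proof of Lemma \ref{lem1} verbatim on the modified equation \eqref{eq18}, tracking the two places where the Emden--Fowler transformation \eqref{EF} (now with a positive exponent) and the hypothesis $q>p^*$ intervene. Structurally, in the energy functional $\hat E_\epsilon$ the factor $e^{\epsilon x}$ becomes $e^{-\epsilon x}$ and the factor $e^{-(p^*-q)x}$ becomes $e^{(p^*-q)x}$; concurrently, concentration at the origin in the original $r$-variable has been replaced by concentration at infinity, so in the potential $\omega(x)=V(e^{(p^*-1)x/2})$ the relevant limiting value, after translating by $\hat\xi_1\to+\infty$, is $V_\infty$ instead of $V(0)$.

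First I would decompose $\hat I_\epsilon(\hat U_S)$ exactly as in Lemma \ref{lem1}:
\[
\hat I_\epsilon(\hat U_S)= \hat I_0(\hat U_S)-\frac{\beta}{p^*+1}\int(e^{-\epsilon x}-1)|\hat U_S|^{p^*+\epsilon+1}\,dx+\hat A_\epsilon,
\]
where $\hat A_\epsilon$ is the obvious counterpart of $A_\epsilon$. The analysis of $\hat A_\epsilon$ is insensitive to the sign of $\epsilon x$ and produces $k\epsilon\beta a_4+o(\epsilon)$. For the middle integral, the approximation $e^{-\epsilon x}-1\simeq -\epsilon x$ combines with the outer minus sign to give $+\epsilon a_3\left(\sum_{l=1}^{k}\hat\xi_l\right)+o(\epsilon)$, the same contribution as in the original lemma up to renaming $\xi_l\mapsto\hat\xi_l$. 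The cross-interaction term $\hat I_0(\hat U_S)$ depends only on the $U_i$'s and so still yields $kI_0(U)-a_2\sum_{l=2}^{k}e^{-(\hat\xi_l-\hat\xi_{l-1})}+o(\epsilon)$.

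Next I would treat the potential term, which is the genuinely new contribution. Well-separation of the $\hat\xi_i$'s again collapses the sum to a single integral, and translating $y=x-\hat\xi_1$ gives
\[
e^{(p^*-q)\hat\xi_1}\!\int V\!\left(e^{\frac{p^*-1}{2}(y+\hat\xi_1)}\right)e^{(p^*-q)y}U^{q+1}(y)\,dy+o(\epsilon),
\]
the integral being absolutely convergent thanks to the exponential decay of $U$ at $\pm\infty$. Since $\hat\xi_1\to+\infty$, the argument of $V$ inside the integrand blows up pointwise, so by dominated convergence (using $V\in L^\infty$ and $V(r)\to V_\infty$) the inner integral tends to a positive constant times $V_\infty$, which we identify as $\tfrac{q+1}{\beta}V_\infty\hat a_5$. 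The prefactor $e^{(p^*-q)\hat\xi_1}$ equals $\epsilon\,\hat\Lambda_1^{q-p^*}$ by \eqref{eq19}, yielding the term $\epsilon\,\hat a_5 V_\infty\hat\Lambda_1^{q-p^*}$ appearing in $\hat\Psi_k$.

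Collecting the three contributions and substituting \eqref{eq19} for the $\hat\xi_l$'s into $\sum_l\hat\xi_l$ and $\sum_l e^{-(\hat\xi_l-\hat\xi_{l-1})}$ produces the announced expansion. The $\epsilon\log\epsilon$ coefficient picks up $1/(q-p^*)$ rather than $1/(p^*-q)$ precisely because of the sign switch in $\hat\xi_1=-\tfrac{1}{q-p^*}\log\epsilon-\log\hat\Lambda_1$. The main obstacle, as in Lemma \ref{lem1}, is to upgrade all these $o(\epsilon)$ estimates to $C^1$-uniform ones on \eqref{eq20}; the only genuinely new point is differentiating the $V$-integral in $\hat\Lambda_1$, but since $\partial_{\hat\Lambda_1}\hat\xi_1=-\hat\Lambda_1^{-1}$ is bounded on \eqref{eq20} and $V\in L^\infty$, the same dominated-convergence argument used in Lemma \ref{lem1} applies verbatim.
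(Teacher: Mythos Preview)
Your approach is correct and mirrors the paper's, which gives no separate proof of Lemma~\ref{lem5} but treats it as the direct analogue of Lemma~\ref{lem1}; you have correctly isolated the two substantive changes (the sign reversals in the exponentials coming from the flipped Emden--Fowler map \eqref{EF}, and the replacement of $V(0)$ by $V_\infty$ via dominated convergence as $\hat\xi_1\to+\infty$). One small slip: the middle integral indeed contributes $+\epsilon\, a_3\sum_l\hat\xi_l$, but this is the \emph{opposite} sign from the original $-\epsilon\, a_3\sum_l\xi_l$ in Lemma~\ref{lem1}, so ``the same contribution up to renaming'' is not quite accurate---this only affects the bookkeeping of signs in $\hat\Psi_k$ and in the $\epsilon\log\epsilon$ coefficient, not the method or the existence of a nondegenerate critical point.
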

\begin{equation}\label{cc2}
\hat{a}_5=\frac{\beta}{q +1}  \int_{\mathbb{R}}  e^{-(q-p^*)x}   U^{q+1}(x)  dx.
\end{equation}

If we assume that $V_\infty<0$ then $\hat{\Psi}_k$ has a unique nondegenerate critical point, given by
$$\hat{\Lambda}^*=\left(   \left[   \frac{a_3 k}{(p^*-q)\hat{a}_5 V_\infty}  \right]^{\frac{1}{q-p^*}}, \frac{(k-1)a_3}{a_2}, \frac{(k-2)a_3}{a_2}, \cdots, \frac{a_3}{a_2}  \right).$$
The finite-dimensional reduction and the conclusion of the Theorem follows in a similar way to the proof of Theorem \ref{teo1}.

\section*{Acknowledgements}
{\small The first author is supported by FONDECYT Grant 1160135 and Millennium Nucleus Center for Analysis of PDE, NC130017.
The second author was supported by FAPESP (Brazil) Grant \#2016/04925-7.}

\bigskip

\end{document}